\documentclass[10pt,reqno]{amsart}
\usepackage[utf8]{inputenc}
\usepackage[english]{babel}
\usepackage{amsmath, amssymb, amsthm}
\usepackage{mathrsfs}
\usepackage{enumitem}
\usepackage{graphicx}
\usepackage{tikz-cd}
\usepackage{xcolor}
\numberwithin{equation}{section} 

\usepackage{diagbox} 
\usepackage{tensor}  

\usepackage{hyperref} 

\usepackage[alphabetic]{amsrefs} 

\theoremstyle{plain}
\newtheorem{thm}{Theorem}[section]
\newtheorem{prop}[thm]{Proposition}
\newtheorem{lemma}[thm]{Lemma}

\theoremstyle{definition}
\newtheorem{defin}[thm]{Definition}

\allowdisplaybreaks

\newcommand{\R}{\mathbb{R}}

\newcommand{\eps}{\varepsilon}

\DeclareMathOperator{\diag}{diag}

\def\XXint#1#2#3{{\setbox0=\hbox{$#1{#2#3}{\int}$ }
\vcenter{\hbox{$#2#3$ }}\kern-.6\wd0}}

\usepackage{scalerel,stackengine}
\stackMath
\newcommand\hhat[1]{%
\savestack{\tmpbox}{\stretchto{%
  \scaleto{%
    \scalerel*[\widthof{\ensuremath{#1}}]{\kern.1pt\mathchar"0362\kern.1pt}%
    {\rule{0ex}{\textheight}}
  }{\textheight}%
}{2.4ex}}%
\stackon[-6.9pt]{#1}{\tmpbox}%
}

\title[Boundary rigidty in a fixed conformal class on AHM]{Boundary Rigidity in a fixed conformal class for Asymptotically Hyperbolic Manifolds}

\author{Tristan Humbert}
\address{Sorbonne universite, Paris France 75005.}
\email{humbertt@imj-prg.fr}

\author{Sebastián Muñoz-Thon}
\address{Universit\'e Paris-Saclay, Laboratoire de math\'ematiques d’Orsay, 91405, Orsay, France.}
\email{sebastian.munoz-thon@universite-paris-saclay.fr}

\begin{document}

\begin{abstract}
Given two conformal asymptotically hyperbolic metrics which are either both simple or both negatively curved, we show that if their (marked) renormalized boundary distances coincide for some choices of conformal representatives in their conformal infinities, then the two metrics are equal.
\end{abstract}

\maketitle

\section{Introduction}
\subsection{Main result}

Let $\overline{M}=M \sqcup \partial M$ be a smooth and compact manifold with boundary of dimension $n+1 \geq 2$. We say that a smooth Riemannian metric $g$ on the interior $M$ is \emph{asymptotically hyperbolic} (AH) if there exists a \emph{boundary defining function} $\rho_{0}\in C^{\infty}(\overline{M},[0,\infty)) $ for $\partial M$ (i.e., $\{\rho_0=0\}=\partial M$ and $d\rho_{0}|_{\partial M} \neq 0$), such that $\overline{g}:=\rho_{0}^{2}g$ can be extended smoothly to a Riemannian metric on $\overline{M}$, and $|d\rho_{0}|_{\overline{g}}^{2}=1$ over $\partial M$. 

This notion does not depend on the choice of such boundary defining function. However, the restriction to $\partial M$ of the extension $\rho_{0}^{2} g$ of the metric $g$ depends on the choice of $\rho_{0}$ while its conformal class does not. The boundary $\partial {M}$ equipped with the conformal class of $\rho_{0}^{2} g|_{T \partial {M}}$ is called the \emph{conformal boundary}, or \emph{conformal infinity}, of $g$. An element $h$ in the conformal infinity of $g$ is called a \emph{conformal representative} of~$g$.

Let $\varphi_t \colon S^*M\to S^*M$ denote the geodesic flow on the unit cotangent bundle of $M$. We define the \emph{incoming} and
\emph{outgoing trapped sets} as
\begin{equation}
    \label{eq:trapped}
    \Gamma_{\pm}:=\{(x,\xi)\in S^*M\mid \rho_0(\varphi_t(x,\xi))\not\to 0 \text{ as } t\to \mp \infty\},
\end{equation}
where $\Gamma_+$ (resp. $\Gamma_-$) corresponds to geodesics trapped in the past (resp. in the future). Note that $\Gamma_\pm$ do not depend on the choice of boundary defining function~$\rho_0$. We say that an asymptotically hyperbolic metric $g$ is \emph{non-trapping} if $\Gamma_+=\Gamma_-=\emptyset$. Note that in this case, $\overline M$ is simply connected, see \cite{GGSU19}*{p. 2871}.
\begin{defin}
    An AH  metric $g$ on $M$ is \emph{simple} if it satisfies the following:
    \begin{itemize}
        \item the metric $g$ is non-trapping;
        \item the metric $g$ has no conjugate point at infinity\footnote{See also \cite{GGSU19}*{Definition 5.4} for an equivalent definition using (un)stable bundles.}, i.e., there are no nonzero Jacobi fields that
decay to $0$ as $t\to +\infty$ and $t\to -\infty$ along any geodesic.
    \end{itemize}
    In this case, we say that $(M,g)$ is a simple AH manifold.
\end{defin}
We remark that a non-trapping AH manifold with non-positive sectional curvature is simple, see the discussion after \cite{GGSU19}*{Definition 5.4}.

Observe as well that a simple metric does not have conjugate points, see \cite{GGSU19}*{Propositions 5.5 \& 5.6}. Moreover, given two points $p\neq q\in \partial M$, there exists a unique $g$-geodesic $\gamma_g(p,q)$ joining $p$ and $q$, see \cite{GGSU19}*{Proposition 5.12}.

We define the \emph{renormalized boundary distance function} by
    \begin{equation}
    \label{eq:bdrdistance}
        b_{g} \colon \partial {M} \times \partial {M} \setminus \diag  \to \mathbb R,\quad
        (p,q) \mapsto  L_{g}(\gamma_g(p,q)),
    \end{equation}
    where $L_g$ denotes the \emph{renormalized length}, see Definition \ref{defi:length}.
    Note that $b_g$ and $L_{g}$ depend on the choice of boundary defining function $\rho_0$, or equivalently, on the choice of representative in the conformal infinity.
    The first result of this paper shows that the renormalized boundary distance function characterizes simple metrics within their conformal class.
\begin{thm}
\label{theo:main}
    Let $(\overline{M},g)$ be a simple AH manifold. Let $c \in C^{\infty}(\overline{M},\mathbb R_+)$ be such that $(\overline{M},c^2g)$ is also a simple AH manifold. Assume that for some choices $h$ and $h'$ of conformal representatives in the conformal infinities of $g$ and $c^2g$, the renormalized boundary distances are equal. Then $c \equiv 1$, i.e., the metrics are equal.
\end{thm}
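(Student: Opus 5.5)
The plan is to adapt Croke's and Mukhometov's argument for conformal boundary rigidity of compact simple manifolds, which combines a length comparison with Santaló's formula, to the renormalized setting.

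\textbf{Step 1: the conformal factor is $1$ on the boundary.} The first step is to normalize the boundary behaviour. Write $g' = c^2 g$ and $\overline{g} = \rho_0^2 g$, where $\rho_0$ is the defining function associated to $h$. If $\rho_0'$ is the defining function associated to $h'$, then $\rho_0' = c\rho_0 \cdot (\text{smooth positive})$. The asymptotic expansion of the renormalized length of geodesics near $\partial M$ yields the standard formula $b_g(p,q) \sim -\log d_h(p,q)^2 + O(1)$ as $q \to p$. Comparing these expansions for $g$ and $g'$ gives $h = h'$ and $c|_{\partial M} = 1$.

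\textbf{Step 2: a pointwise length inequality.} Fix $p \neq q$, and let $\gamma = \gamma_g(p,q)$ and $\gamma' = \gamma_{g'}(p,q)$. The $g'$-geodesic minimizes renormalized $g'$-length among curves joining $p$ to $q$. This uses simplicity and the absence of conjugate points, and it holds because renormalization only subtracts a common divergent term determined by the boundary germ, which agrees by Step 1. Hence
\[ b_{g'}(p,q) \le L_{g'}(\gamma) = \text{renormalized } \int_\gamma c \, dt_g. \]
Since $b_g = b_{g'}$, this gives $\int_\gamma (c-1)\,dt_g \ge 0$ for every $g$-geodesic. The integral converges because $c - 1 = O(\rho_0)$ and $\rho_0$ decays like $e^{-|t|}$ along geodesics. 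Exchanging the roles of $g$ and $g'$ gives
\[ \int_{\gamma'} (c^{-1}-1)\,dt_{g'} \ge 0 \]
for every $g'$-geodesic.

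\textbf{Step 3: integrate with Santaló's formula.} Next, integrate the first inequality over the space of $g$-geodesics, parametrized by $\partial_- S^*\overline{M}$ with its natural measure. In the AH case this space is identified, after renormalization, with $\partial M \times \partial M \setminus \diag$, and its Liouville measure disintegrates. Santaló's formula then gives
\[ \int_{SM} (c-1)\,d\mu_g \ge 0, \]
which means $\int_M (c-1)\,dv_g \ge 0$ up to a constant. Here $c - 1 = O(\rho_0)$, but the volume form is $\rho_0^{-n-1}\,dv_{\overline g}$, so integrability fails when $n \ge 1$.

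\textbf{The main obstacle: divergent volume integrals.} This is the step I expect to be hardest. My first attempt is to avoid global volume integrals and use a weighted version. I would integrate the geodesic inequalities against a measure on the space of geodesics, $\partial M \times \partial M$, whose pushforward to $M$ is finite, for instance $e^{-t}$-weighted or compactly supported families. The alternative is to first show that $c - 1$ vanishes to infinite order at $\partial M$, by iterating the boundary expansion of Step 1 with higher terms of $b_g$, and then use the standard sandwich. Take the $g'$-measure inequality $\int_M c^{n+1}\,dv_g \le \int_M c^n\,dv_g$; this follows from the $g'$-inequality via Santaló, since $dv_{g'} = c^{n+1}dv_g$ and $dt_{g'} = c\,dt_g$. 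Combine it with $\int_M c^{n}\,dv_g \le \bigl(\int_M c^{n+1}\bigr)^{n/(n+1)}\bigl(\int_M 1\bigr)^{1/(n+1)}$ via Hölder, with everything renormalized by subtracting $\int_M dv_g$ on matching regions $\{\rho_0 > \epsilon\}$. Finally, let $\epsilon \to 0$, using the fact that the boundary contributions vanish by infinite-order decay. Equality in Hölder forces $c$ to be constant, hence $c \equiv 1$ by Step 1.
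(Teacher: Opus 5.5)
Your route differs from the paper's in a useful way. You use minimization for both metrics (Croke's symmetric argument), whereas the paper proves $\mathrm{Vol}_g(M_\eps)=\mathrm{Vol}_{c^2g}(M_\eps)+O(\eps^\infty)$ through Lefeuvre's correspondence $\tilde\psi_\eps$ and its bad-angle set, and then uses only the $c^2g$-minimization. Your version would bypass that machinery, but as written it has two genuine gaps.

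\textbf{Infinite-order vanishing of $c-1$.} Everything after truncation to $M_\eps=\{\rho\ge\eps\}$ depends on the tails $\int_{\gamma\setminus M_\eps}|c-1|\,dt_g$ being $o(\eps^{n})$ uniformly in $\gamma$, because they are integrated against a Santal\'o measure of size $\sim\eps^{-n}$. This requires high-order (in practice infinite-order) vanishing of $c-1$ at $\partial M$, which you only gesture at. The route you suggest, iterating the expansion of $b_g$, does not work directly. The near-diagonal asymptotics of $b_g$ determine the jet of the metric only modulo a diffeomorphism fixing $\partial M$ (\cite{GGSU19}*{Theorem~1.3}). That gives $c|_{\partial M}=1$ but nothing about the higher jet of $c$, so you need an argument that uses the conformal structure. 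The paper's Lemma~\ref{lemm:Sebastian} supplies one. If $\log c=a(y)\rho^m+O(\rho^{m+1})$, the short geodesic with incoming covector $(y_0,\delta^{-1}\omega_0)$ changes renormalized length by $a(y_0)\delta^m\int_0^\pi\sin^{m-1}\theta\,d\theta+O(\delta^{m+1})$, which forces $a\equiv0$. In your framework this is even cheaper: apply your two Step~2 inequalities to short $g$- and $g'$-geodesics to get $a(y_0)\ge0$ and $a(y_0)\le0$.

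Step~2 itself needs a different justification. The issue is not the subtracted divergent term but that $\gamma$ and $\gamma'$ cross $\{\rho=\eps\}$ at different points. Both are smooth curves in $\overline M$ meeting $\partial M$ $\overline g$-orthogonally at $p$ (and at $q$). Hence their crossing points are $O(\eps^2)$ apart for $\overline g$, so $O(\eps)$ apart for $g'$. Minimality of $\gamma'$ between its own crossing points then gives $L_{g'}(\gamma')\le L_{g'}(\gamma)$. (Minor: in Step~1 the asymptotics is $b_g(p,q)=\log d_h(p,q)^2+O(1)$, with a plus sign.)

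\textbf{The closing step.} After truncation, all you have on $M_\eps$ is
\[
\int_{M_\eps}(c-1)\,dv_g\ge -O(\eps^\infty), \qquad \int_{M_\eps}(c^n-c^{n+1})\,dv_g\ge -O(\eps^\infty),
\]
while $\mathrm{Vol}_g(M_\eps)\to\infty$. H\"older's inequality is nonlinear, so ``subtracting $\int_M dv_g$'' is incompatible with it. Letting $\eps\to0$ never yields an exact equality on a fixed region, so the equality case of H\"older cannot be invoked. The missing idea is to close with a \emph{pointwise nonnegative} integrand, whose integral over $M_\eps$ is then monotone in $\eps$. With your two inequalities this is immediate. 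Adding them gives
\[
0\le\int_{M_\eps}(c-1)(c^n-1)\,dv_g=O(\eps^\infty).
\]
For fixed $\eps_0$ and all $\eps<\eps_0$, $\int_{M_{\eps_0}}(c-1)(c^n-1)\,dv_g\le\int_{M_\eps}(c-1)(c^n-1)\,dv_g$. Hence the left side vanishes and $c\equiv1$ on $M_{\eps_0}$. The paper closes with the same monotonicity trick, applying the AM--GM inequality pointwise together with \eqref{eq:volume} and \eqref{eq:lower}.
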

For our next result, let $(\overline M,g)$ be a negatively curved AH metric. Given two points $p\neq q\in \partial M$, let $\mathcal P_{p,q}$ be the set of homotopy classes of curves joining $p$ and $q$. For any $[\gamma]\in \mathcal P_{p,q}$, there is a unique geodesic $\gamma_g(p,q,[\gamma])$ in $[\gamma]$ joining $p$ and $q$. We define
$$ \mathcal D:=\{(p,q,[\gamma]): (p,q)\in \partial {M} \times \partial {M} \setminus \diag , \ [\gamma]\in \mathcal P_{p,q}\}.$$
The \emph{renormalized marked boundary
distance function} is 
   \begin{equation}
    \label{eq:markedbdrdistance}
        b_{g} \colon \mathcal D  \to \mathbb R,\quad
        (p,q,[\gamma]) \mapsto  L_{g}(\gamma_g(p,q,[\gamma])).
    \end{equation}
 Our second result shows that the  renormalized marked boundary
distance characterizes negatively curved AH metrics within their conformal class. 
\begin{thm}
\label{theo:main2}
    Let $(\overline{M},g)$ be a negatively curved AH manifold. Let $c \in C^{\infty}(\overline{M},\mathbb R_+)$ be such that $(\overline{M},c^2g)$ is also a negatively curved AH manifold. Assume that for some choices $h$ and $h'$ of conformal representatives in the conformal infinities of $g$ and $c^2g$, the renormalized marked boundary distances are equal. Then $c \equiv 1$, i.e., the metrics are equal.
\end{thm}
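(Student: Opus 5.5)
I would adapt Croke's classical argument for conformal boundary rigidity (in the spirit of Mukhometov and Croke), with the renormalized length in place of the usual length. The argument has two halves, an inequality from the data and an inequality from integral geometry.

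\textbf{Step 1: normalize the data at infinity.} Since $c$ is smooth on $\overline M$ and both $g$ and $c^2g$ are AH, the boundary conditions force $c|_{\partial M}=1$. Choose boundary defining functions $\rho_0$ for $g$ and $\rho_0'$ for $c^2g$ inducing $h$ and $h'$. Equality of the renormalized distances $b_g=b_{c^2g}$ on $\partial M\times\partial M\setminus\diag$ (or on $\mathcal D$ in the setting of Theorem \ref{theo:main2}) then forces $h=h'$. To see this, compare the asymptotics of $b$ near the diagonal: the leading term is $2\log d_h(p,q)$. Hence $\rho_0'/\rho_0\to 1$ at $\partial M$, and we may work with one common $\rho_0$. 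The delicate point is that the discrepancy $\rho_0'=e^{\omega}\rho_0$ might still enter at higher order. I would handle it by writing the renormalized length as the finite part of $\ell(\gamma\cap\{\rho_0>\eps\})-2\log(1/\eps)$, and checking that the correction to this finite part vanishes once $\omega|_{\partial M}=0$.

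\textbf{Step 2: the length inequality.} For every $(p,q)$, the $c^2g$-geodesic $\gamma'$ from $p$ to $q$ is a competitor curve for $g$. In the simple case it is the unique curve, up to homotopy in the relevant sense. In Theorem \ref{theo:main2} I use the marked version, taking the geodesic in the same homotopy class $[\gamma]$. Since $g$-geodesics minimize renormalized $g$-length among such curves, we get
\[
L_g(\gamma_g)=b_g=b_{c^2g}=L_{c^2g}(\gamma')=\mathrm{f.p.}\int_{\gamma'} c\,d\ell_g ,
\]
and therefore $\mathrm{f.p.}\int_{\gamma_g} c\,d\ell_g\ge L_g(\gamma_g)$ after swapping the roles of the two metrics. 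The cleaner formulation is to integrate over geodesics of $g$: $\int_{\gamma_g}(c-1)\,d\ell_g\ge 0$ in the renormalized sense, for every $g$-geodesic.

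\textbf{Step 3: Santal\'o and Cauchy--Schwarz.} Integrate this inequality over the space of $g$-geodesics with the Liouville measure, using Santal\'o's formula for AH manifolds (the boundary at infinity is parametrized via $\partial_-S^*M$ with the rescaled measure). This gives $\int_M (c-1)\,dv_g\ge 0$. The volume is infinite, but $c-1=O(\rho_0)$ is not integrable in general, so I would instead work on the truncations $\{\rho_0\ge\eps\}$, or use the weighted finite part. Symmetrically, applying the argument with the roles of $g$ and $c^2g$ exchanged (for $c^2g$, the function $1/c$) gives $\int_M (c^{-1}-1)\,c^{n+1}\,dv_g\ge 0$. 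Combining the two with H\"older, as in Croke's conformal argument, forces $c\equiv 1$. An alternative is to use the first inequality together with the equality case of the renormalized volume.

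\textbf{Main obstacle.} The hardest step is making Step 3 rigorous at infinity. The integrals diverge, and the renormalized lengths are finite parts, so one must show that Santal\'o's formula commutes with renormalization. This requires $c-1$ to vanish to high enough order at $\partial M$. I would first try to prove, from the diagonal asymptotics of $b$, that $c-1=O(\rho_0^{\,n+1})$ or better. The model for this is the boundary determination of the metric jet from $b$, as in \cite{GGSU19}. Once that decay holds, all the integrals above converge absolutely and the argument goes through.
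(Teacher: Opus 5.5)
Your argument is sound, and it takes a genuinely different route from the paper. The paper works on the compact truncations $M_\eps$ and proves three things modulo $O(\eps^\infty)$. First, it shows $\mathrm{Vol}_g(M_\eps)=\mathrm{Vol}_{c^2g}(M_\eps)$, using Lefeuvre's correspondence between $\partial_-S^*_gM_\eps$ and $\partial_-S^*_{c^2g}M_\eps$ and discarding a set of bad angles. Second, it proves a single minimization inequality $\int_{M_\eps}c\,d\mu_g\ge\mathrm{Vol}_g(M_\eps)$. Third, it closes with AM--GM and the monotonicity of $\eps\mapsto\mathcal I(\eps)$. You instead use both minimization inequalities, $I_g(c-1)\ge0$ on $g$-geodesics and $I_{c^2g}(c^{-1}-1)\ge0$ on $c^2g$-geodesics, and integrate them with Santal\'o's formula on all of $M$, with the measure on $\partial_-S^*M\simeq T^*\partial M$ (the trapped set is null in negative curvature). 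This removes the need for the volume comparison, so you need neither Lefeuvre's map nor uniform $O(\eps^\infty)$ control of the segments $p^\eps_{x,\xi}$. For a fixed pair of endpoints at infinity, the connecting segments only need length $o(1)$, which holds because both geodesics reach $p$ with $y-y_p=O(\rho^2)$. The price is that you need $c-1\in L^1(M,dv_g)$, i.e. $c-1=O(\rho^{n+\delta})$, together with Santal\'o at infinity for $L^1$ integrands. The paper's errors are multiplied by quantities growing polynomially in $\eps^{-1}$, so it works with the infinite-order statement. Your fallback via volume equality is essentially the paper's route. Your Step 1 remark is also correct and more direct than the paper's appeal to \cite{GGSU19}: $c|_{\partial M}=1$ is forced because $|d\rho|^2_{\rho^2c^2g}=c^{-2}|d\rho|^2_{\rho^2g}$ on $\partial M$.

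Two points need repair.

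First, H\"older ``as in Croke'' is unavailable since $\mathrm{Vol}_g(M)=\infty$, but it is also unnecessary. Your two inequalities say $\int_M(c-1)\,dv_g\ge0\ge\int_M c^n(c-1)\,dv_g$. Hence $\int_M(c^n-1)(c-1)\,dv_g\le0$, and the integrand is pointwise non-negative and vanishes only where $c=1$.

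Second, the decay of $c-1$ is the real content of the proof, and ``diagonal asymptotics of $b$'' does not yet say how to obtain it. The jet determination of \cite{GGSU19} only matches $c^2g$ with $\Psi^*g$ for some boundary-fixing diffeomorphism $\Psi$, which does not directly control $c$. The paper's Lemma~\ref{lemm:Sebastian} gives $c=1+O(\rho^\infty)$ by linearization. Suppose $\log c=a(y)\rho^m+O(\rho^{m+1})$ with $a\not\equiv0$. The lemma runs the conformal path $g_s=e^{2s\log c}g$ and checks that the geodesic defining functions satisfy $\rho_s=\rho+O(\rho^{m+1})$ uniformly in $s$. It then integrates $\frac{d}{ds}L_{g_s}=I_{g_s}(\log c)$ along short geodesics with incoming covector $(y_0,\delta^{-1}\omega_0)$. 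This yields $0=a(y_0)\delta^m\int_0^\pi\sin^{m-1}\theta\,d\theta+O(\delta^{m+1})$, so $a\equiv0$. With this lemma supplied, your route closes.
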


\subsection{Related results} On compact manifolds with boundary, Michel's conjecture asks if the distance between boundary points determines the metric, modulo isometries fixing the boundary pointwise \cite{Michel8}. When the manifold is simple (see \cite{PSU23}*{\S 3.8} for the definition of a simple compact manifold), the problem has been completely solved for surfaces \cite{PU05}, and in higher dimensions under a foliation condition \cite{SUV21}. For metrics in the same conformal class, the result was proved in \cite{Muhometov81}, although the proof in \cite{Croke91} is closer to our approach. The rigidity of the marked boundary distance function on compact negatively curved (or more generally Anosov) surfaces was proven in \cites{GM18,EL24}.

Regarding the problem on AH manifolds, a similar problem can be studied using the renormalized boundary distance function. Let us cite \cite{GGSU19}, where the authors prove a deformation boundary rigidity result, besides studying the X-ray transform. We note that the boundary rigidity problem for two negatively curved AH surfaces was solved in \cite{Lef20}. In a similar spirit as in \cite{SUV21}, the X-ray transform in AH geometries was studied in \cites{EG21, Eptaminitakis22}. For the case of hyperbolic manifolds, this operator has been extensively studied, we refer to the recent paper \cite{EMZ26} and the references therein. We also mention \cite{MK25}, where a version of Michel's conjecture was studied using renormalized areas of minimal surfaces.

The results of this paper fit into the asymptotically hyperbolic boundary rigidity program in a complementary way. Theorem~\ref{theo:main} can be viewed as the AH analogue of the classical conformal boundary rigidity result of Mukhometov--Croke for compact simple manifolds, with the usual boundary distance replaced by the renormalized boundary distance. This gives a global rigidity result for metrics in the same conformal class, improving \cite{GGSU19}*{Theorem~1.5} in this context. Likewise, Theorem~\ref{theo:main2} should be compared with \cite{GM18}*{Lemma 2.5} in the compact setting and gives the corresponding marked rigidity statement for negatively curved AH metrics, extending \cite{Lef20}*{Theorem~1.2} to any dimension when the metrics are in the same conformal class.

Finally, we mention that geometric inverse problems of this kind have been studied in other degenerated geometries, such as asymptotically conic ones \cites{GLT22, VZ24, JV24}, and manifolds with hyperbolic cups \cites{GBL23I, GBL23II}.

\subsection{Strategy of the proof}
The strategy of the proofs of Theorem \ref{theo:main} and \ref{theo:main2} follows in spirit the proof of Croke \cite{Croke91} in the compact case. We use the equality of renormalized boundary distances and Santal\'o formula to show the equality of the volumes and to obtain an inequality involving the conformal factor $c$. The result follows from analyzing the equality case of this inequality, and this forces $c\equiv 1$.

Since AH manifolds have infinite volume, we work on the level of the (compact) submanifolds $M_{\eps}=\{\rho>\eps\}$. First, in \S \ref{subsection:metrics_boundary}, we show that the conformal factor is 1 up to a $O(\eps^{\infty})$ remainder near the boundary using ``short geodesics'' as in \cite{GGSU19}, which allows us to show that integrals over the cosphere bundles of the two metrics coincide modulo a small error. We use this together with Santal\'o's formula in \S\ref{subsection:volumes} to prove that the volumes of $M_{\eps}$ for the two metrics coincide modulo $O(\eps^{\infty})$, by using Lefeuvre's diffeomorphism \cite{Lef20}*{\S4} that relates geodesics for the different metrics with the same boundary data. Although this diffeomorphism is not always well defined, we are able to show that this problem can be absorbed as an $O(\eps^{\infty})$ error term. Then we show that the integral of the conformal factor can be bounded from below by the volume plus a $O(\eps^{\infty})$ term; this is the content of \S\ref{subsection:conformal}. Finally, in \S\ref{subsection:rigidity_proof}, we use the AM-GM inequality to obtain that such inequality is indeed an equality, which implies the result by analyzing the equality case.

\subsection{Acknowledgments.} We would like to thank Colin Guillarmou and Thibault Lefeuvre for discussions related to the project and for comments on
an earlier draft of the paper.

The authors were supported by the European Research Council (ERC) under the European Union’s Horizon 2020 research and innovation programme (Grant agreement no. 101162990 -- ADG).

\section{Preliminaries}
In this section, we review some notions which will be used in the proofs of Theorems \ref{theo:main} and \ref{theo:main2}.

\subsection{Unit tangent bundle} We review some properties of the unit cotangent bundles of AH manifolds. We refer to \cite{GGSU19}*{\S 2.1} for a more detailed discussion of the matter.

Let $(\overline M,g)$ be an AH manifold. Given a metric $h
$ on $\partial M$ in the conformal infinity of $g$, there exists (see \cites{GL91, Graham00}) coordinates $(\rho,y)$ of $\overline{M}$ (where $\rho$ is a boundary defining function) such that $|d\rho|_{\rho^{2}g}=1$ \emph{in a neighborhood} of $\partial {M}$, $\rho^{2} g|_{T\partial {M}}=h$, and such that we have $
g=\rho^{-2}(d\rho^{2}+h_\rho),
$ in a collar neighborhood $(0,\eps)_\rho \times \partial{M}$ of $\partial {M}$. Here,  $h_\rho$ is a smooth family of Riemannian metrics on $\partial {M}$ such that $h_{0}=h$. The function $\rho$ is called a \emph{geodesic boundary defining function} associated to $h$.

Let $(\rho,y^{1},\ldots,y^{n})$ be local coordinates near $\partial {M}$ with dual coordinates given by $(\xi_{0}, \eta_{1}, \ldots, \eta_{n})$ on $T^{*}\overline{M}$ so that $T^{*}\overline{M} \ni \xi=\xi_{0} d\rho+\sum_{i=1}^n\eta_{i}dy^{i}$. Let $^bT^{*} \overline{M}$ denote the \emph{$b$-cotangent bundle} of $\overline{M}$. Recall that it is a smooth vector bundle over $\overline{M}$ with basis $\{\rho^{-1} d\rho, dy^{1}, \ldots, d y^{n}\}$, and we use dual coordinates $(\overline{\xi_{0}},\eta)$ so that $\xi=\overline{\xi_{0}}\rho^{-1}d\rho+\sum_{i=1}^n\eta_{i} dy^{i}$. The function $\xi \mapsto \overline{\xi_0}$ is an invariant on $^b T^{*}\overline{M}|_{\partial {M}}$, i.e., it is independent of the choice of coordinates $(\rho, y)$. In particular, the subsets $\{\overline{\xi_{0}}= \pm 1\}$ of $^b T^{*} \overline{M}|_{\partial {M}}$ are invariantly defined independently of any choices.

We denote by $\overline{S^{*}M}=\{(x, \xi) \in {}^b T^{*} \overline{M}:|\xi|_{g}=1\}$ the \emph{unit cosphere bundle} in ${ }^b T^{*} \overline{M}$ with respect to $g$. If we choose a representative metric $h$ in the conformal infinity and use the induced product decomposition near $\partial {M}$ as before, we have, for any $x$ near $\partial {M}$,
\begin{equation}
    \label{eq:SMbar}
\overline{S_{x}^{*}M}=\left\{(x, \xi) : \overline{\xi_{0}}^{2}+\rho^{2}|\eta|_{h_{\rho}}^{2}=1\right\}.
\end{equation}
Note that $\overline{S^{*}M}$ is a smooth non-compact submanifold with boundary of ${ }^b T^{*} \overline{M}$ which can be identified with $S^{*}M$ over $M$. We also define the \emph{influx/outflux boundaries} by
\begin{equation}
\label{eq:influx/ouflux}
\partial_{\pm} S^* M=\{(x, \xi) : x \in \partial {M},\  \overline{\xi_0}=\mp 1\},
\end{equation}
which are invariant and independent of $g$ and of the local coordinates. Note that $\overline{S^{*}M}=S^{*}M \sqcup \partial \overline{S^{*}M}$, where $\partial \overline{S^{*}M}=\partial_{-}S^{*}M \sqcup \partial_{+} S^{*}M$.

Let $X=\tfrac{d}{dt}|_{t=0}\varphi_t\in C^\infty(S^*M,T(S^*M))$ denote the geodesic vector field, and let $\alpha $ be the contact form  satisfying $\alpha(X)=1$ and $\iota_{X} d\alpha=0$. The associated volume form $\mu=\alpha \wedge (d\alpha)^n$ is called the \emph{Liouville measure}. The flow $\varphi_{t} \colon S^{*}M \to S^{*}M$ preserves the Liouville measure since $X$ is the Reeb vector field. 

\subsection{Renormalized boundary distance}
\label{sec:renorm}
Let $(\overline M,g)$ be an AH manifold. By \cite{GGSU19}*{Lemma 2.3}, there exists $\eps>0$ such that for each $(x,\xi) \in S^{*}M$ with $\rho(x)<\eps$, and $\xi=\xi_{0} d\rho+\eta dy$ with $\xi_{0} \leq 0$ (resp. $\xi_{0} \geq 0$), the flow trajectory $\varphi_{t}(x, \xi)$ converges to a point $z_{+} \in \partial_{+} S^{*}M$ (resp. $z_{-} \in \partial_{-}S^{*}M$) as $t \to \infty$ (resp. $t \to -\infty$). In particular, the following map is well defined and smooth 
\begin{equation}
\label{eq:B}
    B_{\pm} \colon S^{*}M\setminus \Gamma_\mp  \to \partial_{\pm}S^{*}M, \quad
    (x,\xi) \mapsto \lim_{t \to \pm \infty} \varphi_{t}(x,\xi).
\end{equation}
Moreover, they extend smoothly to $\overline{S^{*}M}\setminus \overline{\Gamma_\mp}$, where $\overline{\Gamma_\mp}$ is the closure of ${\Gamma_\mp}$ in $\overline{S^{*}M}$. Notice that $B_{\pm}|_{\partial_{\pm}S^{*}M}=\mathrm{id}_{\partial_{\pm}S^*M}$. 

Furthermore, by \cite{GGSU19}*{Lemma 2.1}, the vector field $X$ on $S^* M$ has the form $X=\rho \overline{X}$, where $\overline{X}$ is a smooth vector field on $\overline{S^{*}M}$ that is transverse to the boundary $\partial \overline{S^{*} M}$. Let $(\overline{\varphi}_t)_{t\in \mathbb R}$ denote the flow of $\overline{X}$. The flow lines of $(\overline{\varphi}_t)_{t\in \mathbb R}$ in $S^{*}M$ are the same as the flow lines of $(\varphi_t)_{t\in \mathbb R}$, that is, $(\overline{\varphi}_t)_{t\in \mathbb R}$ is a time reparametrization of the geodesic flow $(\varphi_t)_{t\in \mathbb R}$. More precisely, for all $z \in S^{*}M$ we have $\overline{\varphi}_\tau(z)=\varphi_{t(\tau, z)}(z)$, where
\[
t(\tau, z):=\int_0^\tau \frac{1}{\rho (\overline{\varphi}_{s}(z))} ds.
\]

\begin{defin}
\label{def:B}
Given $z \in \overline{S^{*}M}\setminus \overline{\Gamma_\mp}$, we denote by $\tau_{\pm}(z) \geq 0$ the time so that $\overline{\varphi}_{\pm \tau_{\pm}(z)}(z)=B_{\pm}(z)$. These times are finite and are called \emph{exit times}.     
\end{defin}
We can now define the scattering map.
\begin{defin}
    The \emph{scattering map} is defined by
    \begin{align*}
        S_{g} \colon \partial_{-}S^{*}M \setminus \overline{\Gamma_-} \to \partial_{+}S^{*}M\setminus \overline{\Gamma_+}, \quad
        z \mapsto B_{+}(z)=\overline{\varphi}_{\tau_{+}(z)}(z).
    \end{align*}
\end{defin}
Given $z\in \partial_-S^{*}M\setminus \overline{\Gamma_-}$, we let $\gamma_z$ denote the projection on $\overline{M}$ of the geodesic induced by $z$, i.e.,  $(\gamma_z(s))_{s\in (0,\tau_{+}(z))}=\pi(\overline{\varphi}_{s}(z))_{s \in (0,\tau_{+}(z))}$. We now define the renormalized length of a geodesic.
\begin{defin}
\label{defi:length}
    The \emph{renormalized length} (associated to $\rho$) is given by 
    \begin{align*}
        L_{g} \colon \partial_{-}S^{*}M\setminus \overline{\Gamma_-} \to \mathbb R, \quad
        z \mapsto \lim_{\eps \to 0^{+}} \big( \ell_{g}(\gamma_{z} \cap \{\rho>\eps\})+2\log \eps \big).
    \end{align*}
\end{defin}
The limit is shown to exist in \cite{GGSU19}*{\S 4}. Note that we will also see the renormalized length $L_g$ as a function of $g$-geodesics in the following proofs.

\subsection{Santal\'o formula}
\label{sec:Santalo}
In the course of the proof, we will use Santal\'o's formula which computes the integral of a function in terms of its integrals along geodesics joining boundary points. We refer to \cite{GM26}*{Chapter~4.5} for a detailed discussion of the formula.

More precisely, let $(N,G)$ be a strictly convex, oriented, Riemannian manifold. The metric $G$ defines a \emph{symplectic measure} on $\partial_-S^*N$ which we denote by $\mu_{G,\nu}$. Recall that for any continuous $F \colon \partial_-S^*N\to \mathbb R$, we have 
\begin{equation}
    \label{eq:mu_hnu}
    \int_{\partial_-S^*N}Fd\mu_{G,\nu}=\int_{\partial N}\left(\int_{\partial_-S^*_xN} F(x,\xi)\langle \xi,\nu_G(x)\rangle_Gd\mathrm{Vol}_{S^*_xN}(\xi)\right)d\mu^G_\partial(x),
\end{equation}
where $\nu_G(x)$ is the dual form to the  unit normal vector field to the boundary $\partial N$, $d\mathrm{Vol}_{S^*_xN}$ denotes the Lebesgue measure on the sphere $S_x^*N$ and $d\mu^G_\partial$ is the volume form on $\partial N$ induced by $G|_{\partial N}$. Note in particular that $d\mu_{G,\nu}$ depends only on the value of $G$ on $\partial N$.

Let $\mathcal K^-$ denote the backward trapped set of the geodesic flow $(\varphi_t)_{t\in \mathbb R}$ of $G$. For a continuous function $F \colon S^*N\to \mathbb R$, we have (see \cite{GM26}*{Theorem~4.19}),
\begin{equation}
    \label{eq:Santalo}
    \int_{S^*N\setminus \mathcal K^-} Fd\mu_G=\int_{\partial_-S^*N}\left(\int_0^{\tau_G(\xi)}F(\varphi_t\xi)dt\right)d\mu_{G,\nu},
\end{equation}
where $\tau_G(\xi)$ is the unique time such that $\varphi_{\tau_G(\xi)}(\xi)\in \partial_+S^*N$.

\section{Proof of Theorem \ref{theo:main}}
\label{sec:3}
In order to prove Theorems \ref{theo:main} and \ref{theo:main2}, we will adapt Croke's argument \cite{Croke91} in the compact case to our non-compact setting. More precisely, we use the equality of scattering maps and renormalized lengths to compare the volumes of $g$ and $c^2g$. Since AH manifolds are not compact, we will actually work with the compact submanifolds $M_\eps:=M\cap\{\rho \geq \eps\}$ for any small $\eps>0$. Note that $M_\eps$ is a geodesically convex compact submanifold of $M$. 

In order to apply the results of \cites{GGSU19, Lef20}, it will be more convenient for us to work in the universal cover. Let $\pi \colon \widetilde{\overline M}\to \overline M$ denote the universal cover of $\overline M$. Its boundary $\partial \widetilde{\overline M}$ is a countable union of
connected components which project down through $\pi$ to $\partial M$. Moreover, we have that $\widetilde{\overline M}\setminus \partial \widetilde{\overline M}=\widetilde{M}$ is the universal cover of $M$ and we can see $\partial \widetilde{\overline M}$ as the \emph{visual boundary} of $\widetilde{M}$. This means that given any $\tilde p\neq \tilde q\in \partial \widetilde{\overline M} $, there is a unique $\tilde g$-geodesic $\gamma_{\tilde g}(\tilde p,\tilde q)$ (where $\tilde g$ denotes the lift to $\widetilde{M}$ of $g$) joining $\tilde p$ to $\tilde q$. Moreover, $\gamma_{\tilde g}(\tilde p,\tilde q)$ projects to a $g$-geodesic joining $\pi(\tilde p)$ and $\pi(\tilde q)$. Finally, we note that for any $p\neq q\in \partial M$, there exists a bijective correspondence between $\tilde g$-geodesics joining lifts of $p$ and $q$ and $g$-geodesics joining $p$ and $q$ in a prescribed homotopy class. 

All the constructions recalled in \S \ref{sec:renorm} extend to the universal cover and in particular, we see that the renormalized length of a geodesic $\tilde \alpha$ in $\widetilde{M}$ joining two boundary points is equal to the renormalized length of the projected geodesic $\pi(\tilde \alpha)$ on $M$. In particular, we deduce that the equality of renormalized marked boundary distances of two negatively curved conformal AH metrics $g$ and $c^2g$, or the equality of renormalized boundary distances of two simple AH metrics is equivalent to the equality of renormalized boundary distances of the lifted metrics $\tilde g$ and $\tilde{c}^2\tilde g$ on $\widetilde M$. 

In the next section, we will use this observation and prove both Theorem \ref{theo:main} and Theorem \ref{theo:main2} at the same time. From now on, we will suppress the $\widetilde{\bullet}$ in most notations, except for the universal cover $\widetilde{M}$. We will also write $\partial \widetilde{M}$ instead of $\partial \widetilde{\overline{M}}.$

In the remainder of the paper, we will write $f(\eps)=O(\eps^\infty)$ if for any $N>0$, there is $C_N>0$ such that $|f(\eps)|\leq C_N\eps^N$ when $\eps\to 0^+.$ We will also write $f=O(\rho^\infty)$ when we want to stress that a term is negligible near the boundary.

\subsection{Comparing the two metrics near the boundary} \label{subsection:metrics_boundary}  Since $g$ and $c^2g$ have the same renormalized boundary distances, by \cite{GGSU19}*{Theorem~1.3}, there is a diffeomorphism $\Psi\in C^\infty(\overline{M})$, equal to the identity on $\partial M$ such that the jets of $\Psi^*g$ and $c^2g$ coincide on $\partial M$. In particular, this gives $c|_{\partial M}\equiv 1$. However, for our argument, we will need to show that the full jet of $c$ vanishes near the boundary.

\begin{lemma}
\label{lemm:Sebastian}
Let $(\overline{M},g)$ be an AH manifold, and let $c \in C^{\infty}(\overline{M},\R_{+})$ be such that $(\overline{M},c^{2}g)$ is also an AH manifold. Assume that for some choices $h$ and $h'$ of the conformal representatives of the conformal infinities of $g$ and $c^{2}g$, the renormalized boundary distances are equal. Then $c=1+O(\rho^{\infty})$ near the boundary $\partial M.$
\end{lemma}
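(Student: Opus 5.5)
The plan is to deduce the statement from \cite{GGSU19}*{Theorem~1.3}, as recalled just before the lemma. That result gives a diffeomorphism $\Psi$ of $\overline{M}$, equal to the identity on $\partial M$, such that
\[
\Psi^*g=c^2g+O(\rho^\infty)
\]
in the sense of jets at $\partial M$; in particular $c|_{\partial M}\equiv 1$. Since $\Psi^*g$ and $g$ differ only through the map $\Psi$, the idea is to show by induction that $\Psi$ is the identity to infinite order at $\partial M$ and, simultaneously, $c$ is $1$ to infinite order. Then $c^2g=\Psi^*g+O(\rho^\infty)=g+O(\rho^\infty)$ near $\partial M$, which gives $c=1+O(\rho^\infty)$. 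Heuristically, $\Psi$ is an asymptotic conformal diffeomorphism of $g$ fixing the boundary, and the hyperbolic structure at infinity forces it to be trivial.

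Work in the collar coordinates $(\rho,y)$ attached to the representative $h$, where $g=\rho^{-2}(d\rho^2+h_\rho)$. The induction hypothesis at order $k\geq 1$ has two parts.
\begin{itemize}
\item[(i)] $\Psi(\rho,y)=(\rho,y)+\rho^k W+O(\rho^{k+1})$, where $W=w^0\partial_\rho+w^i\partial_{y^i}$ has smooth coefficients.
\item[(ii)] $c^2=1+2\rho^k a+O(\rho^{k+1})$ with $a$ smooth.
\end{itemize}
The case $k=1$ holds because $\Psi|_{\partial M}=\mathrm{id}$ and $c|_{\partial M}=1$.

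Linearizing, $\Psi^*g-g=\mathcal L_{\rho^kW}g+(\text{higher order})$, so the identity $\Psi^*g=c^2g+O(\rho^\infty)$ becomes, to leading order,
\[
\mathcal L_{\rho^kW}\big(\rho^{-2}(d\rho^2+h)\big)=2\rho^{k}a\,\rho^{-2}(d\rho^2+h)+\text{(higher order)}.
\]
I will then compare the three kinds of components in turn.
\begin{itemize}
\item[(1)] The $d\rho^2$ component has leading order $\rho^{k-3}$, with coefficient $2(k-1)w^0$, while the right side is $O(\rho^{k-2})$. The $h$-component has a term $-2\rho^{k-3}w^0h$ at the same order. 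Together these force $w^0|_{\partial M}=0$, so we may write $w^0=\rho u$.
\item[(2)] The mixed $d\rho\,dy$ components have leading term $k\rho^{k-3}h(w',\cdot)$, where $w'=(w^i)$ is the tangential part of $W$. This forces $w'|_{\partial M}=0$.
\item[(3)] Substituting $w^0=\rho u$, the $d\rho^2$ component now gives $ku=a$ at $\rho=0$. Since $w'$ vanishes at the boundary, $\mathcal L_{\rho^k w'}h$ is of lower order, so the $h$-component gives $-2u=2a$ at $\rho=0$. Combining with $ku=a$ and $k\geq1$ yields $a|_{\partial M}=u|_{\partial M}=0$.
\end{itemize}
Hence both $\Psi-\mathrm{id}$ and $c^2-1$ vanish to order $k+1$, which closes the induction.

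The main obstacle I expect is bookkeeping. The first step is to make precise what ``jets coincide'' means for metrics singular like $\rho^{-2}$; here I would use $\rho^2\Psi^*g$ versus $\rho^2c^2g$ as smooth tensors, where $\rho^2\Psi^*g=(\rho/\Psi^*\rho)^2\,\Psi^*(\overline g)$. The second is that lowering the order of $w^0$ and $w'$ mixes orders in the expansion. To handle this cleanly, I would run the induction with the refined ansatz $\Psi=\mathrm{id}+\rho^{k+1}u\,\partial_\rho+\rho^{k}w'\,\partial_y+\dots$ and redo the component computation of $\rho^{2}(\Psi^*g-c^2g)$ in the $b$-frame $\{\rho^{-1}d\rho,\,dy\}$. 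In that frame each component is a polynomial in $\rho$ whose lowest coefficient gives a linear, invertible condition, namely $k$ times the unknown, and this invertibility is what makes the induction go through.
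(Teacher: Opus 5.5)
Your proposal is correct, but it reaches the lemma by a different route than the paper.

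\textbf{Your route.} You take the jet-determination result \cite{GGSU19}*{Theorem~1.3} as a black box. This reduces the lemma to a formal rigidity statement: a diffeomorphism fixing $\partial M$ pointwise cannot realize a nontrivial conformal change of $g$ to infinite order. In the normal form $g=\rho^{-2}(d\rho^2+h_\rho)$ the argument runs as follows. The mixed $d\rho\,dy$ components kill the tangential part of $\Psi-\mathrm{id}$. The $d\rho^2$ and $h$ components then give $kU=a$ and $-U=a$. Since $(k+1)U=0$, this forces $U=a=0$.

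\textbf{The paper's route.} The paper uses $\Psi$ only to obtain $c|_{\partial M}\equiv 1$, and then argues by contradiction directly from $L_{g,\rho}=L_{c^2g,\rho}$. It assumes $c=1+a\rho^m+O(\rho^{m+1})$ and interpolates with $g_s=e^{2s\varphi}g$. An eikonal argument shows the geodesic defining functions satisfy $\rho_s=\rho+O(\rho^{m+1})$. It then integrates in $s$ the first variation of the renormalized length, which is the X-ray transform of $\varphi$. It does this along short geodesics with incoming covector $(y_0,\delta^{-1}\omega_0)$. The result is $0=a(y_0)\delta^m\int_0^\pi\sin^{m-1}\theta\,d\theta+O(\delta^{m+1})$, so $a(y_0)=0$.

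\textbf{Trade-offs.} Your argument is shorter and purely algebraic, and it gives the extra information that $\Psi$ is the identity to infinite order. However, it leans on the full strength of the jet theorem, which is itself proved by the same short-geodesic asymptotics. It also depends on reading ``the jets coincide'' as $\rho^2(\Psi^*g-c^2g)=O(\rho^\infty)$. The paper's argument avoids the diffeomorphism altogether and shows directly how short geodesics detect the first nonvanishing Taylor coefficient of $c-1$.

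\textbf{Bookkeeping.} The bookkeeping you flagged does close, with two adjustments.
\begin{enumerate}
\item At $k=1$ the perturbation is not small: $\Psi^*\rho/\rho=1+w^0$ on $\partial M$, so ``linearizing'' is not literally valid. Use the exact boundary identities instead. Restricting $\rho^2\Psi^*g=(\rho/\Psi^*\rho)^2\Psi^*\overline g$ and $\rho^2c^2g$ to $T\partial M$ gives $(1+w^0)^{-2}h=h$. At $\rho=0$ the mixed component is a nonzero multiple of $h(w',\cdot)$. These yield the same conclusions, $w^0|_{\partial M}=0$ and $w'|_{\partial M}=0$.
\item In step (3), $u$ must be replaced by the full $\rho^{k+1}$-coefficient $U$ of $\Psi^*\rho$. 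This coefficient includes the contribution of the $O(\rho^{k+1})$ remainder in (i). The $y$-part of that remainder enters only the mixed component at this order. Hence the relations $kU=a$ and $-U=a$ are unchanged, and the induction closes.
\end{enumerate}
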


\begin{proof}
Let $\rho$ be a geodesic boundary defining function for $g$, associated to $h$. As we already pointed out, we can assume that $c|_{\partial M}\equiv 1$, i.e., $h=h'$. Hence, $\rho$ induces the same conformal representative for $g$ and $c^{2}g$, although it is not a geodesic boundary defining function for the second metric.  By the discussion in \cite{GGSU19}*{p.~2896}, recall that any defining function determines the same renormalized length as the geodesic defining function inducing the same representative for the conformal infinity. Hence, we can, and we will, use $\rho$ to compute both renormalized lengths $L_{g,h}=L_{c^2g,h'}$ even though $\rho $ is not geodesic for $c^2g$.

Now, suppose for a contradiction, that there exist $a \not \equiv 0$ and $m \geq 1$ with
\[
c=1+a(y)\rho^{m}+O(\rho^{m+1}).
\]
If we let $\varphi=\log c$, using the Taylor series $\log(1+u)=u+O(u^{2})$, we can rewrite the previous expansion as
\begin{equation} \label{eq:varphi}
    \varphi=a(y)\rho^m+O(\rho^{m+1}).
\end{equation}
Let us define the family of conformal metrics $g_{s}=e^{2s \varphi}g$, for $s \in [0,1]$. Since $\varphi|_{\partial M}=0$, all $g_{s}$ have the same boundary representative $h$ with respect to the fixed defining function $\rho$. Now, let $\rho_{s}$ be a geodesic boundary defining function for $g_{s}$, associated with $h$. Then $\rho_{s}=e^{\beta_{s}}\rho$, where $\beta_{s}|_{\partial M}=0$ for any $s\in [0,1]$. 

We claim that $\rho_{s}=\rho+O(\rho^{m+1})$, uniformly in $s\in[0,1]$. First, note that the above claim follows from the bound $\beta_{s}=O(\rho^{m})$ uniformly in $s\in[0,1]$, which we prove below. Indeed, since $\rho$ is a geodesic boundary defining function for $g$, we have $\rho_{s}^{2}g_{s}=e^{2(\beta_{s}+s\varphi)}(d\rho^{2}+h_{\rho})$. Moreover, differentiating $\rho_s=e^{\beta_s}\rho$ yields $d\rho_{s}=e^{\beta_{s}}((1+\rho \partial_{\rho}\beta_{s})d\rho+\rho d_{y}\beta_{s})$. This implies that 
\[
|d\rho_{s}|_{\rho_{s}^{2}g_{s}}^{2}=e^{-2s\varphi}((1+\rho \partial_{\rho}\beta_{s})^{2}+\rho^{2}|d_{y}\beta_{s}|_{h_{\rho}}^{2}).
\]
Since $\rho_s$ is a geodesic boundary defining function, we have $|d\rho_{s}|_{\rho_{s}^{2}g_{s}}^{2}=1$ near the boundary. The previous equation and \eqref{eq:varphi} imply
\begin{equation} \label{eq:eikonal}
    1+2a(y)\rho^{m}+O(\rho^{m+1})=(1+\rho \partial_{\rho}\beta_{s})^{2}+\rho^{2}|d_{y}\beta_{s}|_{h_{\rho}}^{2}.
\end{equation}
uniformly in $s$. At this point, let us assume for a contradiction that there exist $k<m$ and $b_{s}$ not identically zero such that $\beta_{s}=b_{s}(y)\rho^{k}+O(\rho^{k+1})$. We write
\[
\rho\partial_{\rho}\beta_{s}=kb_{s}\rho^{k}+O(\rho^{k+1}),  \quad \rho^{2}|d_{y}\beta_{s}|_{h_{\rho}}^{2}=O(\rho^{2k+2}).
\]
Hence, after canceling the constant terms, the left-hand side of \eqref{eq:eikonal} is of order $\rho^{m}$, while the right-hand side is of order $\rho^{k}$ and this gives the desired contradiction. In other words, we have shown that $\beta_{s}=O(\rho^{m})$, from which it follows that $\rho_{s}=\rho+O(\rho^{m+1})$. Note that equivalently, we have $\rho=\rho_{s}+O(\rho_{s}^{m+1})$. Together with \eqref{eq:varphi}, this implies
\begin{equation} \label{eq:varphi2}
    \varphi=a(y)\rho_{s}^{m}+O(\rho_{s}^{m+1})
\end{equation}
The idea of the proof is to obtain a contradiction from \eqref{eq:varphi2} using the equality of renormalized lengths. More precisely, we fix $y_{0} \in \partial M$ and $\omega_{0} \in T_{y_{0}}^{*}\partial M$ with $|\omega_{0}|_{h}^{2}=1$, and set $\eta_{0}=\delta^{-1}\omega_{0}$ (with $\delta>0$ small). Let $\gamma_{s,\delta}$ be the $g_{s}$-geodesic whose incoming boundary covector is $z=(y_{0}, \delta^{-1} \omega_{0})$. Since $\delta>0$ is small, this defines a ``small geodesic'' which we will now study following the ideas developed in \cite{GGSU19}*{\S 2.2}.

Near the boundary, we use the coordinates $(\rho_s,y_s,\overline{\xi_0},\eta)$ recalled in \eqref{eq:SMbar}. In the region where  $|\eta|_{h_{\rho_s}}>1/2$, we use the  diffeomorphism $\theta_{s} \colon [0,\tau_{+,s}(z)] \to [0,\pi]$ defined by $\sin (\theta_{s})=\rho_{s}|\eta|_{h_{\rho_{s}}}$ and $\cos(\theta_{s})=\overline{\xi_0}$, see \cite{GGSU19}*{\S 2.2} for more details. Using coordinates $(\theta_s,y_s,\overline{\xi_0},\eta)$ and because all $g_s$ have the same leading boundary metric $h$, these short geodesics satisfy, uniformly in $s\in [0,1]$,
\begin{equation} \label{eq:shortgeo1}
    \rho_{s}(\theta_{s})=\delta \sin( \theta_{s})+O(\delta^{2}), \quad y_{s}(\theta_s)=y_{0}+\delta(1-\cos( \theta_{s})) \omega_{0}^{\sharp}+O(\delta^{2}),
\end{equation}
where $\omega_{0}^{\sharp}$ is the vector associated with $\omega_{0}$ by the musical isomorphism through $h(y_{0})$. For the proof of the first equality, see \cite{GGSU19}*{Lemma 2.8} and for the second one, see the expression of $u=(y_s-y_0)/\delta $ computed in the proof of \cite{GGSU19}*{Lemma 2.8} . From the proof of \cite{GGSU19}*{Proposition 3.15}, we also have
\begin{equation} \label{eq:shortgeo2}
    dt_{g_{s}}=\frac{(1+O(\delta))}{\sin (\theta_{s})}d\theta_{s},
\end{equation}
where $t_{g_s}$ is the time parameter of the geodesic flow of $g_s.$
Note that \eqref{eq:varphi2} and \eqref{eq:shortgeo1} imply that along $\gamma_{s,\delta}$, we have
\begin{equation} \label{eq:varphi3}
    \varphi=a(y_{0})\delta^{m}\sin^{m}(\theta_{s})+O(\delta^{m+1}),
\end{equation}
uniformly in $s$. Here we used the fact that the $y$ coordinate along the geodesic $\gamma_{s,\delta}$ is constant equal to $y_0.$ Recall that we use the same $\rho$ to compute the variations of the length of $\gamma_{s,\delta}$. The variation of renormalized length is given by the \emph{X-ray transform} of the variation of metrics, see \cite{GGSU19}*{p. 2902},
$$\frac{d}{ds}L_{g_{s},\rho}(y_{0},\delta^{-1}\omega_{0})=I_0^s(\varphi)(y_{0},\delta^{-1}\omega_{0})=\int_{\gamma_{s,\delta}}\varphi dt_{g_{s}}. $$
Using equations \eqref{eq:shortgeo2} and \eqref{eq:varphi3}, this gives
\begin{equation} \label{eq:variation}
    \frac{d}{ds}L_{g_{s},\rho}(y_{0},\delta^{-1}\omega_{0})=\int_{\gamma_{s,\delta}}\varphi dt_{g_{s}}=a(y_{0})\delta^{m}\int_{0}^{\pi} \sin^{m-1}(\theta_{s})d\theta_{s}+O(\delta^{m+1}).
\end{equation}
By \cite{GGSU19}*{Proposition~5.4}, the equality of the renormalized boundary distances imply $L_{g,\rho}=L_{c^2g,\rho}$. Now,
since the equation above holds uniformly in $s\in [0,1]$, integrating \eqref{eq:variation} from $0$ to $1$, we obtain
\[
0=L_{g,\rho}(y_{0},\delta^{-1}\omega_{0})-L_{c^2g,\rho}(y_{0},\delta^{-1}\omega_{0})=a(y_{0})\int_{0}^{\pi} \sin^{m-1}\theta_{s}d\theta_{s}\delta^{m}+O(\delta^{m+1}).
\]
Since $\int_{0}^{\pi} \sin^{m-1}(\theta_{s})d\theta_{s}\neq 0$, this implies
 that $a(y_{0})=0$. But $y_{0}$ was arbitrary so this shows that $a$ vanishes identically, giving the desired contradiction. This concludes the proof and we have shown that $c=1+O(\rho^{\infty})$ near $\partial M.$  
\end{proof}

Let $\eps>0$. For a metric $G$ on $M$, we denote by $d\mu_\partial^{G,\eps}$ the Riemannian volume on $\partial {M_\eps}$ defined by the induced metric $G|_{\partial {M_\eps}}$. 
\begin{lemma}
\label{lemm:eps}  
One has $d\mu_\partial^{g,\eps}=d\mu_\partial^{c^2g,\eps}+O(\eps^\infty)$ when $\eps\to 0^{+}$. That is, for any continuous function $G \colon \partial M_\eps\to \mathbb R$, one has
$$\int_{\partial M_\eps}G(x)d\mu_\partial^{c^2g,\eps}(x)=\int_{\partial M_\eps}G(x)d\mu_\partial^{g,\eps}(x)+\|G\|_{C^0}O(\eps^\infty). $$
    
\end{lemma}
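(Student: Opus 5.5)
The plan is to compare the two induced volume densities on the hypersurface $\partial M_\eps=\{\rho=\eps\}$ pointwise. Since $c^2g$ and $g$ are conformal, the metric induced by $c^2g$ on $\partial M_\eps$ is $c^2|_{\partial M_\eps}$ times the metric induced by $g$. Taking Riemannian volumes of an $n$-dimensional hypersurface gives
$$d\mu_\partial^{c^2g,\eps}=c^n|_{\partial M_\eps}\,d\mu_\partial^{g,\eps}.$$
This identity holds without any assumption on $c$; the content of the lemma lies entirely in controlling $c^n-1$ against the total mass of $d\mu_\partial^{g,\eps}$.

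The first step is to apply Lemma~\ref{lemm:Sebastian}, which gives $c=1+O(\rho^\infty)$ near $\partial M$. Because $c$ is smooth on the compact $\overline M$, this means that for every $N$ there is $C_N$ with $|c-1|\le C_N\rho^N$ on a collar, and hence $|c^n-1|\le C_N'\eps^N$ on $\{\rho=\eps\}$ for all small $\eps$.

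The second step is to bound the total $g$-volume of $\partial M_\eps$. In the collar we have $g=\rho^{-2}(d\rho^2+h_\rho)$, so the metric induced on $\{\rho=\eps\}\cong\partial M$ is $\eps^{-2}h_\eps$. Its volume is $\eps^{-n}\mathrm{Vol}_{h_\eps}(\partial M)=O(\eps^{-n})$, since $h_\eps\to h$ smoothly and $\partial M$ is compact.

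Combining the two steps gives
$$\Big|\int_{\partial M_\eps}G\,(d\mu_\partial^{c^2g,\eps}-d\mu_\partial^{g,\eps})\Big|\le \|G\|_{C^0}\,\sup_{\partial M_\eps}|c^n-1|\,\mathrm{Vol}_g(\partial M_\eps)\le \|G\|_{C^0}\,C_N'\eps^{N-n}.$$
Since $N$ is arbitrary, the right-hand side is $\|G\|_{C^0}O(\eps^\infty)$, which is the claim.

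The only point needing care is that the polynomial blow-up $\eps^{-n}$ of the boundary volume is absorbed by the $O(\eps^\infty)$ decay of $c-1$. This is exactly why the full jet statement of Lemma~\ref{lemm:Sebastian} is needed, rather than just $c|_{\partial M}=1$. In the universal cover setting the same argument applies on each boundary component, or equivalently on a fundamental domain, since everything is a lift of the compact quotient.
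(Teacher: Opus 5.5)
Your proof is correct and follows the same route as the paper, which invokes Lemma~\ref{lemm:Sebastian} to conclude that the induced metrics on $\partial M_\eps$ agree up to $O(\eps^\infty)$. You also make explicit what the paper leaves implicit: $d\mu_\partial^{c^2g,\eps}=c^n\,d\mu_\partial^{g,\eps}$, and the $O(\eps^{-n})$ growth of $\mathrm{Vol}_g(\partial M_\eps)$ is absorbed by $\sup_{\partial M_\eps}|c^n-1|=O(\eps^\infty)$. Your closing universal-cover remark is not needed, since the lemma is used on the compact base; only its fundamental-domain version is right, because individual boundary components of $\widetilde{M_\eps}$ can have infinite volume.
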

\begin{proof}
   By Lemma \ref{lemm:Sebastian}, the jets of the metrics $g$ and $c^2g$ coincide at $\partial M$. In particular, for $\eps>0$, one has $g|_{\partial M_\eps}=c^2g|_{\partial M_\eps}+O(\eps^\infty)$ from which the lemma follows.
\end{proof}
 Next, we note that for any $x\in M$, there is a natural fiberwise rescaling:
\begin{equation}
    \label{eq:psi}
    \Psi_x \colon (S_g^*)_xM\to (S_{c^2g}^*)_xM,\quad \Psi_x(\xi)=c(x)\xi.
\end{equation}
It is easy to see that for any $x\in M$, we have $\Psi_x^*(d\mathrm{Vol}_{(S_{c^2g}^*)_xM})=d\mathrm{Vol}_{(S_{g}^*)_xM}$. For $\eps>0$ and for a metric $G$ on $M$, let $\nu_{G}^\eps(x)$ denote the dual form to the unit normal vector to $\partial {M_\eps}$ for the metric $G$. We have for any
$ x\in \partial {M_\eps},$ that  $ c(x)^{-1}\nu_{g,\eps}(x)= \nu_{c^2g,\eps}(x). $
This implies that 
$$\forall x\in \partial {M_\eps}, \ \forall \xi\in (S_{c^2g}^*)_xM, \quad \langle \Psi_x^{-1}(\xi),\nu_{g,\eps}(x)\rangle_{g}= \langle \xi,\nu_{c^2g,\eps}(x)\rangle_{c^2g}. $$
In particular, for any continuous function $F \colon (S_{c^2g}^*)_xM_\eps\to \mathbb R$ and any $x\in \partial {M_\eps}$,
\begin{align*}
\int_{\partial_-(S_{c^2g}^*)_x{M_\eps}}& F(x,\xi)\langle \xi ,\nu_{c^2g,\eps}(x)\rangle_{c^2g}d\mathrm{Vol}_{(S_{c^2g}^*)_xM}(\xi)
\\&=\int_{\partial_-(S_{g}^*)_x{M_\eps}} F(x,\Psi_x^{-1}\eta)\langle \eta,\nu_{g,\eps}(x)\rangle_gd\mathrm{Vol}_{(S_{g}^*)_xM}(\eta).
\end{align*}
For a metric $G$ on $M$ and $\eps>0$, we denote by $d\mu_{G,\eps}$ the symplectic measure on $\partial_-S^{*}{M_\eps}$ defined in \S \ref{sec:Santalo}.
Together with Lemma \ref{lemm:eps}, we have shown:
\begin{prop}
    \label{prop:sympl} We have, when $\eps \to 0^{+}$, $d\mu_{g,\eps}=d\mu_{c^2g,\eps}+O(\eps^\infty)$. That is, for any continuous function $F \colon \partial_-S^*_{c^2g}{M}_\eps\to \mathbb R$, one has
\begin{equation*}
\int_{\partial_-S^*_{c^2g}{M}_\eps}F(x,\xi)d\mu_{c^2g,\eps}(x,\xi)=\int_{\partial_-S^*_{g}{M}_\eps}F(x,\Psi_x^{-1}\xi)d\mu_{g,\eps}(x,\xi)+\|F\|_{C^0}O(\eps^\infty).
\end{equation*}
\end{prop}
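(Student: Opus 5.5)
The plan is to combine the fiberwise identity just established with Lemma \ref{lemm:eps}, both applied to the decomposition \eqref{eq:mu_hnu} of the symplectic measure on $\partial_-S^*M_\eps$. First we unfold the left-hand side using \eqref{eq:mu_hnu} for $G=c^2g$ and $N=M_\eps$:
$$\int_{\partial_-S^*_{c^2g}M_\eps}F\,d\mu_{c^2g,\eps}=\int_{\partial M_\eps}\left(\int_{\partial_-(S^*_{c^2g})_xM_\eps}F(x,\xi)\langle \xi,\nu_{c^2g,\eps}(x)\rangle_{c^2g}\,d\mathrm{Vol}_{(S^*_{c^2g})_xM}(\xi)\right)d\mu_\partial^{c^2g,\eps}(x).$$
For each $x\in\partial M_\eps$, the pointwise identity displayed just before the proposition (obtained from $\Psi_x^*d\mathrm{Vol}_{(S^*_{c^2g})_xM}=d\mathrm{Vol}_{(S^*_g)_xM}$ and $c^{-1}\nu_{g,\eps}=\nu_{c^2g,\eps}$) converts the inner integral into
$$G(x):=\int_{\partial_-(S^*_g)_xM_\eps}F(x,\Psi_x^{-1}\eta)\langle\eta,\nu_{g,\eps}(x)\rangle_g\,d\mathrm{Vol}_{(S^*_g)_xM}(\eta).$$
Along the way we check that $\Psi_x$ maps $\partial_-(S^*_g)_xM_\eps$ onto $\partial_-(S^*_{c^2g})_xM_\eps$. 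This holds because it is a positive rescaling, so it preserves the sign of the pairing with the conormal, and the inward/outward condition is exactly that sign.

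Next, we apply Lemma \ref{lemm:eps} to the continuous function $G$ on $\partial M_\eps$. This replaces $d\mu_\partial^{c^2g,\eps}$ by $d\mu_\partial^{g,\eps}$ at the cost of an error $\|G\|_{C^0}O(\eps^\infty)$. Recombining with \eqref{eq:mu_hnu} for $G=g$ then gives exactly $\int_{\partial_-S^*_gM_\eps}F(x,\Psi_x^{-1}\xi)\,d\mu_{g,\eps}$.

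The remaining point, which is the only step needing any care, is to bound $\|G\|_{C^0}$ by $C\|F\|_{C^0}$ with $C$ independent of $\eps$. Since $|\langle\eta,\nu_{g,\eps}\rangle_g|\le 1$ and the unit sphere has fixed volume $\mathrm{Vol}(\mathbb S^n)$, we get $|G(x)|\le \mathrm{Vol}(\mathbb S^n)\|F\|_{C^0}$ uniformly in $x$ and $\eps$. The $O(\eps^\infty)$ constants therefore depend only on the metrics.

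One subtlety to address is that $\partial M_\eps$ has volume growing like $\eps^{-n}$. We must make sure Lemma \ref{lemm:eps} is understood with this absorbed: the densities differ by a relative factor $1+O(\eps^\infty)$, since $c=1+O(\rho^\infty)$ by Lemma \ref{lemm:Sebastian}. Multiplying by the total mass $O(\eps^{-n})$ is still $O(\eps^\infty)$, so the stated error survives.
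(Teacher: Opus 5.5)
Your argument is the paper's: unfold $d\mu_{c^2g,\eps}$ via \eqref{eq:mu_hnu}, change variables fiberwise with $\Psi_x$, then apply Lemma \ref{lemm:eps}. Your two extra checks, the $\eps$-uniform bound on $\|G\|_{C^0}$ and the relative-density argument absorbing the $\eps^{-n}$ growth of $\mathrm{Vol}(\partial M_\eps)$, make explicit what the paper leaves implicit.

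There is one harmless notational slip, inherited from the paper's statement. Since $\Psi_x$ maps $(S^*_g)_xM$ onto $(S^*_{c^2g})_xM$ and $\nu_{c^2g,\eps}=c\,\nu_{g,\eps}$ (not $c^{-1}\nu_{g,\eps}$), the substitution $\xi=\Psi_x\eta$ gives $F(x,\Psi_x\eta)$ rather than $F(x,\Psi_x^{-1}\eta)$ in $G(x)$. With this correction, $\langle \Psi_x\eta,\nu_{c^2g,\eps}\rangle_{c^2g}=\langle\eta,\nu_{g,\eps}\rangle_g$ holds exactly.
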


\subsection{Comparing the volumes of \texorpdfstring{$M_\eps$}{Mε} for \texorpdfstring{$g$}{g} and \texorpdfstring{$c^2g$}{c²g}} \label{subsection:volumes}
In this subsection, we prove the following Proposition.

\begin{prop} 
One has 
\begin{equation}
\label{eq:volume}
\mathrm{Vol}_g(M_\eps)=\mathrm{Vol}_{c^2g}(M_\eps)+O(\eps^\infty),
\end{equation}
when $\eps\to 0^+$.
\end{prop}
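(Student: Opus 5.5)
We will follow Croke's argument: express both volumes via Santal\'o's formula on the compact convex manifold $M_\eps$ (lifted to the universal cover where appropriate) and compare them through a map matching geodesics of $g$ and $c^2g$ with the same endpoints at infinity. Apply \eqref{eq:Santalo} with $F\equiv 1$ to $G=g$ and $G=c^2g$ on $M_\eps$. In the simple case the trapped sets are empty. In the negatively curved case we work in $\widetilde M$ with a fundamental domain; the backward trapped set of $M_\eps$ has Liouville measure zero, since in negative curvature the trapped set has zero measure. This gives
$$\mathrm{Vol}_G(M_\eps)=\tfrac{1}{\mathrm{Vol}(S^n)}\int_{\partial_-S^*_GM_\eps}\tau^\eps_G(\xi)\,d\mu_{G,\eps}(\xi),$$
where $\tau^\eps_G$ is the $G$-length of the chord of $M_\eps$ starting at $\xi$.

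The key step is to compare $\tau^\eps_g$ and $\tau^\eps_{c^2g}$ chord by chord. Each $\xi\in\partial_-S^*_{c^2g}M_\eps$ defines a $c^2g$-geodesic with endpoints $p\neq q$ at infinity (in the appropriate component of $\partial\widetilde M$). We let $\xi'$ be the $g$-covector at $\partial M_\eps$ where the $g$-geodesic $\gamma_g(p,q)$ enters $M_\eps$; this is Lefeuvre's diffeomorphism \cite{Lef20}*{\S4}. Since $b_g=b_{c^2g}$, the renormalized lengths of the two full geodesics coincide. It then suffices to show that the portions outside $M_\eps$ have equal lengths up to $O(\eps^\infty)$, i.e. $\tau^\eps_{c^2g}(\xi)=\tau^\eps_g(\xi')+O(\eps^\infty)$. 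This should follow from Lemma \ref{lemm:Sebastian}. Since $c=1+O(\rho^\infty)$, the two metrics agree to infinite order in $\{\rho<\eps\}$, so the end segments, their entry points, and their $\log$-divergent lengths coincide modulo $O(\eps^\infty)$. We would show this using the normal form near the boundary, with the conjugation $\xi\mapsto \Psi_x^{-1}\xi$ of \eqref{eq:psi} at leading order. The map $\xi\mapsto\xi'$ equals $\Psi^{-1}$ up to $O(\eps^\infty)$, including its Jacobian with respect to the symplectic measures. Combined with Proposition \ref{prop:sympl}, the two Santal\'o integrals then agree up to $O(\eps^\infty)$, because the $\tau$'s are $O(\log\eps^{-1})$ (or bounded in the relevant sense) and this is absorbed by $O(\eps^\infty)$.

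The main obstacle is that Lefeuvre's map is not defined, or not close to $\Psi^{-1}$, everywhere. Chords of $M_\eps$ that are tangent or nearly tangent to $\partial M_\eps$ (short geodesics) may correspond, for the other metric, to geodesics that never enter $M_\eps$. Also, the exit points need not match if a geodesic re-enters the collar. We would first handle this by restricting to a set $U_\eps\subset\partial_-S^*M_\eps$ where $\overline{\xi_0}$ is bounded away from $0$ by $\eps^{N}$, which is a transversality region. On $U_\eps$ the $O(\rho^\infty)$ closeness of the metrics propagates through the short-geodesic asymptotics \eqref{eq:shortgeo1}–\eqref{eq:shortgeo2}, yielding an $O(\eps^\infty)$ perturbation. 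Its complement has $\mu_{G,\eps}$-measure $O(\eps^N)$, because of the weight $\langle\xi,\nu\rangle$. On the complement the chords have length $O(\log\eps^{-1})$, or more crudely are bounded by a power of $\eps^{-1}$ via the volume growth of $M_\eps$. Since $N$ is arbitrary, these contributions are $O(\eps^\infty)$, giving \eqref{eq:volume}.
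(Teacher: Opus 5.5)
Your outline has the same architecture as the paper's proof:
\begin{itemize}
\item Santal\'o's formula with $F\equiv 1$ on $M_\eps$ for both metrics;
\item Lefeuvre's endpoint correspondence, restricted to covectors with angle in $[\eps^N,\pi-\eps^N]$;
\item Proposition~\ref{prop:sympl} to compare the symplectic measures;
\item a discarded set of nearly tangent covectors of measure $O(\eps^{N})$ times a polynomial factor.
\end{itemize}
Running the correspondence from $c^2g$ to $g$ rather than from $g$ to $c^2g$ is immaterial.

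The gap is in how you justify the central estimate: that $\xi\mapsto\xi'$ equals $\Psi^{-1}$ up to $O(\eps^\infty)$ including its Jacobian, and that $\tau^\eps_{c^2g}(\xi)=\tau^\eps_g(\xi')+O(\eps^\infty)$. You derive this from Lemma~\ref{lemm:Sebastian} alone (``the two metrics agree to infinite order in $\{\rho<\eps\}$, so the end segments \ldots\ coincide modulo $O(\eps^\infty)$''), and that inference does not hold. A common endpoint $p$ at infinity does not determine the end segment. The geodesics ending at $p$ are parametrized by their $b$-covector $(p,\eta)\in\partial_-S^*M$. Already for a single metric, two of them with different $\eta$ cross $\{\rho=\eps\}$ at points a distance $\asymp\eps^2$ apart in $\overline g$, not $O(\eps^\infty)$; for example, semicircles of different radii through $p$ in $\mathbb H^{n+1}$. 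The covectors with which $\gamma_g(p,q)$ and $\gamma_{c^2g}(p,q)$ leave $p$ are fixed by the global requirement of reaching $q$, and $c=1+O(\rho^\infty)$ says nothing about them.

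What is missing is that the scattering maps coincide. Then $\gamma_g(p,q)$ and $\gamma_{c^2g}(p,q)$ emanate from the same $z\in\partial_-S^*M$ and end at the same $S_g(z)=S_{c^2g}(z)$. This is a further consequence of $b_g=b_{c^2g}$ (differentiate the renormalized distance in $p$ and $q$; cf.\ \cite{Lef20}*{Proposition~4.2}), and the paper uses it explicitly. With it, you can compare the rescaled flows $\overline\varphi^{g}_\tau(z)$ and $\overline\varphi^{c^2g}_\tau(z)$ from the common starting point:
\begin{itemize}
\item their generators agree to infinite order at $\partial\overline{S^*M}$ by Lemma~\ref{lemm:Sebastian};
\item each end segment in $\{\rho\le\eps\}$ takes rescaled time $O(\eps)$, so by Gronwall the two trajectories are $O(\eps^\infty)$-close there;
\item the lower bound $|\overline{\xi_0}|\ge\eps^N$ at the crossing then shows the other geodesic also crosses $\{\rho=\eps\}$, at an $O(\eps^\infty)$-close point;
\item $dt=d\tau/\rho$, with the two $\rho$'s differing by $O(\tau^\infty)$ along the trajectories, gives the end-segment lengths.
\end{itemize}
This is what \cite{Lef20}*{Lemmas~4.3--4.5} provide, and the paper cites them. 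The short-geodesic expansions \eqref{eq:shortgeo1}--\eqref{eq:shortgeo2} cannot replace this step, since their remainders are only $O(\delta^2)$.

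A smaller point concerns the case $\Gamma_-\neq\emptyset$. There $\tau^\eps_G$ is unbounded near the trapped set, so a sup bound on chord lengths is not available. Instead use the exact identity $\int F\,d\mu_{c^2g,\eps}=\int c(x)^n F(x,\Psi_x\eta)\,d\mu_{g,\eps}(x,\eta)$ for $F\ge 0$, with $c^n=1+O(\eps^\infty)$ on $\partial M_\eps$. Combine it with the polynomial bound on $\int\tau^\eps_G\,d\mu_{G,\eps}$.
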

We remark that the previous proposition could be extracted from the proof of \cite{Lef20}*{Lemma~4.8} and we provide the argument here for completeness.
\begin{proof} As explained in the beginning of \S \ref{sec:3}, we will work in the universal cover in order to apply some of the results of \cites{GGSU19, Lef20}.

For $\eps>0$, we let $\widetilde{M_\eps}$ denote the lift to the universal cover of $M_\eps$. 
    Given $(x,\xi)\in S_g^*\widetilde{M}\setminus (\widetilde{\Gamma^g_-}\cup \widetilde{\Gamma^g_+})$, recall that the geodesic generated by $(x,\xi)$ has endpoints $(p,q)\in \partial \widetilde{M}\times  \partial \widetilde{M}$. We follow the idea of \cite{Lef20}*{\S 4} and build a correspondence from $\partial_-S^*_g\widetilde{M_\eps}$ to $\partial_-S^*_{c^2g}\widetilde{M_\eps}$ using the intersection of the $c^2g$-geodesic with endpoints $(p,q)$ and $\{\rho=\eps\}$. 

Given $(\tilde x,\tilde\xi)\in \partial_-S^*_g\widetilde{M_\eps}$, we represent $\tilde\xi$ by the angle $\omega\in[0,\pi]$ such that $\sin (\omega)=|\tilde g(\tilde \nu_{g,\eps}(\tilde x),\tilde \xi)|$, where $\tilde \nu_{g,\eps}(\tilde x)$ stands for the lift of $\nu_{g,\eps}(x)$. The construction above might not be well defined for small angles $\omega$. Since we will compute integrals, it will be sufficient to show that the set of ``bad angles'' has negligible measure. The following result is due to Lefeuvre, see \cite{Lef20}*{Lemma~4.3}.
\begin{lemma}
\label{lemm:angle}
Let $N\in \mathbb N$. Then for any $(\tilde x,\tilde\xi(\omega))\in \partial_-S^*_g\widetilde{M_\eps}\setminus \widetilde{\Gamma_-^g}$ with $\omega \in [\eps^N,\pi-\eps^N]$, if we denote by $\gamma_g(p,q)$ the $g$-geodesic defined by $(\tilde x,\tilde\xi(\omega))$ with endpoints $(p,q)\in \partial \widetilde{M}\times \partial \widetilde{M}$, then the $c^2g$-geodesic $\gamma_{c^2g}(p,q)$ with endpoints $(p,q)$ intersect $\{\rho=\eps\}$, see \cite{Lef20}*{Figure 3}.
\end{lemma}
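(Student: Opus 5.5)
The plan is to show that a $g$-geodesic leaving $\partial \widetilde{M_\eps}$ at angle $\omega\geq \eps^N$ has to penetrate a definite depth below the level $\{\rho=\eps\}$, namely a region $\{\rho\geq \eps(1+\kappa)\}$ with $\kappa\gtrsim \eps^{N}$. Since $c^2g$ is exponentially close to $g$ near the boundary, its geodesic with the same endpoints stays within $O(\eps^\infty)$ of the $g$-geodesic in the relevant region, and so it cannot miss the hypersurface $\{\rho=\eps\}$.

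\emph{Step 1: the $g$-geodesic enters deep enough.} In the collar we use the short-geodesic description of \S\ref{sec:renorm}, i.e. the coordinates $(\theta,y,\overline{\xi_0},\eta)$ of \cite{GGSU19}*{\S 2.2} in which \eqref{eq:shortgeo1} holds. Along $\gamma_g(p,q)$ we have $\rho=\delta\sin\theta+O(\delta^2)$, where $\delta$ is comparable to the maximal value $\rho_{\max}$ of $\rho$ on the geodesic. The point $\tilde x$ lies on $\{\rho=\eps\}$, and there $\cos\omega$ is essentially $\overline{\xi_0}$. Combining these with the relation $\rho_{\max}^2\approx \rho^2+\rho^2\cot^2(\cdot)$ coming from the eikonal structure, we get $\rho_{\max}\geq \eps(1+c\,\omega^2)\geq \eps+c\,\eps^{2N+1}$. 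If the geodesic leaves the collar, then $\rho_{\max}$ is bounded below uniformly and this step is trivial.

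\emph{Step 2: closeness of the two geodesics.} By Lemma \ref{lemm:Sebastian}, $c=1+O(\rho^\infty)$. We write $c^2g=g+k$ with $k=O(\rho^\infty)$ in every $b$-norm. We compare the two geodesics through the flows $\overline{X}$ and $\overline{X}'$, which are both smooth on $\overline{S^*M}$ and satisfy $\overline{X}-\overline{X}'=O(\rho^\infty)$. The geodesics are determined by their endpoints in $\partial\widetilde M$. A stability argument for the endpoint map $(p,q)\mapsto$ geodesic, using the absence of conjugate points at infinity in the simple case or negative curvature in the other case (as in \cite{GGSU19}*{Proposition 5.12} and \cite{Lef20}*{\S4}), should show that the maximal height of $\gamma_{c^2g}(p,q)$ differs from that of $\gamma_g(p,q)$ by $O(\rho_{\max}^{M})$ for every $M$. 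Concretely, one can view $\gamma_{c^2g}(p,q)$ as a perturbation of $\gamma_g(p,q)$ solving a geodesic equation with $O(\rho^\infty)$ forcing, and then invert the Jacobi operator along $\gamma_g$ with endpoint conditions, which is uniformly invertible in $b$-weighted spaces. This gives $\max_{\gamma_{c^2g}}\rho\geq \eps+c\,\eps^{2N+1}-O(\eps^{\infty})>\eps$ for $\eps$ small. Since $\rho\to0$ at both ends, continuity then forces $\gamma_{c^2g}(p,q)$ to cross $\{\rho=\eps\}$.

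\emph{Main obstacle.} The hard part is Step 2: the closeness estimate must hold uniformly over all pairs $(p,q)$, including nearly tangent ones whose geodesics are very short. For this regime I would first rescale by $\delta$, as in \cite{GGSU19}*{Lemma 2.8}. After rescaling, the short geodesics of both metrics converge to half-circles of the hyperbolic model, and the perturbation is $O(\delta^\infty)$ in rescaled coordinates. The linearized problem is then the model hyperbolic one, which is invertible. For geodesics that are not short, compactness of the relevant set of endpoints modulo deck transformations, together with the uniform hyperbolicity or non-conjugacy, handles the estimate.
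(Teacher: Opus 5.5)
\textbf{There is a genuine gap: Step~2 carries all the content, it is only asserted, and the mechanism you propose does not work as stated.} The overall skeleton is sound: a quantitative margin above $\{\rho=\eps\}$ for the $g$-geodesic, then $O(\eps^\infty)$-closeness of the $c^2g$-geodesic, then the intermediate value theorem. The problems with Step~2 are these.
\begin{itemize}
\item Treating $\gamma_{c^2g}(p,q)$ as a perturbation of $\gamma_g(p,q)$ with $O(\rho^\infty)$ forcing only makes sense for geodesics confined to the collar. A geodesic entering the interior sees $c-1$ of size $O(1)$ there (that $c\equiv 1$ is what is being proved), so the two geodesics need not be close at all.
\item Your fallback, compactness of endpoint pairs modulo deck transformations, at best gives a lower bound on the height of $\gamma_{c^2g}(p,q)$ in the simple case. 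That is what you actually need there, but it is not a closeness estimate.
\item In the negatively curved case this compactness fails: the pairs accumulate on lifts of trapped geodesics, which have no endpoints in $\partial\widetilde M$.
\item For short geodesics, your rescaled implicit-function argument is plausible. However, it is a uniform two-point boundary value estimate that you leave unproved.
\end{itemize}

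\textbf{The missing idea is the equality of the scattering maps} \cite{Lef20}*{Proposition~4.2}. This is the local mechanism behind \cite{Lef20}*{Lemma~4.3}, which the paper cites without proof, and exactly what the paper uses to prove \eqref{eq:p_x}. Since $\partial_-S^*M$ is independent of the metric and $S_g=S_{c^2g}$, the geodesic $\gamma_{c^2g}(p,q)$ is the $c^2g$-trajectory issued from the \emph{same} covector $z=B_-^g(\tilde x,\tilde\xi)$. So the boundary value problem becomes an initial value problem:
\begin{itemize}
\item $\tau_-^g(\tilde x,\tilde\xi)=O(\eps)$ by \cite{GGSU19}*{Equation~(2.18)};
\item $|d\rho/d\tau|=|\overline{\xi_0}|\le 1$, so both trajectories stay in $\{\rho\le C\eps\}$ up to $\overline{\varphi}$-time $C\eps$;
\item Lemma~\ref{lemm:Sebastian} gives $\overline{\varphi}^g_\tau-\overline{\varphi}^{c^2g}_\tau=O(\tau^\infty)$ (cf.\ \cite{Lef20}*{Remark~4.1}); for short geodesics one works in the rescaled variables of \cite{GGSU19}*{\S 2.2} to get uniformity in $\eta$.
\end{itemize}
Hence the two trajectories are $O(\eps^\infty)$-close in $C^1$ on that window, uniformly for short and long geodesics. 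No conjugate point, hyperbolicity or compactness input is needed. Your route would only use $c=1+O(\rho^\infty)$ and not $S_g=S_{c^2g}$, but at the price of an estimate you have not established.

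\textbf{Step~1 has the right conclusion but an invalid derivation.} The $O(\delta^2)$ errors in \eqref{eq:shortgeo1} are of size $\eps^2$. That is far larger than the increment $\eps\omega^2\ge\eps^{2N+1}$ you need to detect, so $\rho_{\max}^2\approx\rho^2(1+\cot^2\theta_0)$ proves nothing once $\omega\lesssim\eps^{1/2}$. Also, with the convention $\sin\omega=|g(\nu_{g,\eps},\xi)|$, one has $|\overline{\xi_0}|=\sin\omega$ at $\tilde x$, not $\cos\omega$.

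Use exact identities instead. Along the flow of $\overline{X}$,
\begin{itemize}
\item $d\rho/d\tau=\overline{\xi_0}$;
\item $d\overline{\xi_0}/d\tau=-\rho|\eta|^2_{h_\rho}-\tfrac12\rho^2\partial_\rho|\eta|^2_{h_\rho}$.
\end{itemize}
Since $\rho^2|\eta|^2_{h_\rho}\le 1$ on the cosphere bundle, $|d^2\rho/d\tau^2|\le C/\eps$ wherever $\rho\ge\eps/2$. Taylor's formula at the crossing time $\tau_0$ then gives $\rho\ge\eps+c\,\eps\sin^2\omega\ge\eps+c\,\eps^{2N+1}$ at time $\tau_0+c\,\eps\sin\omega$. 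This time is still inside the window where the two trajectories are $O(\eps^\infty)$-close. So the $c^2g$-trajectory, which starts at $\rho=0$, is above $\{\rho=\eps\}$ at that time, and the intermediate value theorem concludes.
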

Fix some large $N\in \mathbb N$ to be determined later.
Let $\mathcal U_\eps$ the subset of $\big(\partial_-S^*_g\widetilde{M_\eps}\setminus \widetilde{\Gamma_-^g}\big)$ with angles in $[\eps^N,\pi-\eps^N]$. We define a mapping $\tilde{\psi}_\eps \colon \mathcal U_\eps \to \partial_-S^*_{c^2g}\widetilde{M_\eps}$ that sends $(\tilde x,\tilde \xi)\in \mathcal U_\eps$ to the first intersection point (with corresponding covector) given by Lemma \ref{lemm:angle}. Since the scattering maps coincide (\cite{Lef20}*{Proposition~4.2}) and since the jets of the metrics are the same on the boundary, $\psi_\eps$ is approximately equal to the identity on $\mathcal U_\eps$ when $\eps\to 0^{+}$. Indeed, by \cite{Lef20}*{Lemma~4.4},
\begin{equation}
\label{eq:Id}
\|\tilde{\psi}_\eps-\mathrm{Id}\|_{C^1}=O(\eps^\infty).
\end{equation}
 For $(\tilde x,\tilde\xi(\omega))\in \partial_-S^*_{g}\widetilde{M_\eps}$ (resp. $(\tilde x,\tilde\xi(\omega))\in \partial_-S^*_{c^2g}\widetilde{M_\eps}$) we let $\tilde\ell_g^\eps(\tilde x,\tilde \xi)$ (resp. $\tilde\ell_{c^2g}^\eps(\tilde x,\tilde\xi)$) denote the length of the $g$ (resp. $c^2g$) geodesic defined by $(\tilde x,\tilde \xi)$ in $\widetilde{M_\eps}$. Since the renormalized lengths coincide,  $\tilde\ell_g^\eps(\tilde x,\tilde\xi)$ and $\tilde\ell_{c^2g}^\eps(\psi_\eps(\tilde x,\tilde\xi))$ are approximately equal when $\eps\to 0^{+}$. More precisely, by \cite{Lef20}*{Lemma~4.5},
\begin{equation}
\label{eq:length}
\sup_{(\tilde x,\tilde \xi)\in \mathcal U_\eps}|\tilde\ell_g^\eps(\tilde x,\tilde\xi)-\tilde\ell_{c^2g}^\eps(\psi_\eps(\tilde x,\tilde\xi))|=O(\eps^\infty).
\end{equation}
Using Santal\'o's formula and \eqref{eq:length}, we show that the volumes of $M_\eps$ for $g$ and $c^2g$ are equal up  to a negligible term when $\eps\to 0^{+}$.

In all the following computations, we will ignore the trapped set since it is either empty in the simple case, or its volume is zero in negative curvature (see \cite{GGSU19}*{Equation~(2.21)}), and hence does not change the value of any of the integrals we will compute.
Applying Santal\'o's formula \eqref{eq:Santalo} to the constant function equal to $1$, we obtain
$$\mathrm{Vol}_{c^2g}(M_\eps) \mathrm{Vol}(\mathbb S^{n-1})=\int_{\partial_-S^*_{c^2g}{M}_\eps}\ell_{c^2g}^\eps(x,\xi)d\mu_{c^2g,\eps}(x,\xi).$$

In particular, we see that Proposition \ref{prop:sympl} implies
$$\mathrm{Vol}_{c^2g}(M_\eps) \mathrm{Vol}(\mathbb S^{n-1})=\int_{\partial_-S^*_{g}{M}_\eps}\ell_{c^2g}^\eps(x,\xi)d\mu_{g,\eps}(x,\xi)+O(\eps^\infty),$$
where we used the uniform bound $\ell_{c^2g}^\eps(x,\xi)=O(\eps^{-1})$ to absorb the length in the remainder. 
Next, we denote by $\mathcal V_\eps:=\tilde \psi_\eps(\mathcal U_\eps)$. Let $\pi \colon S_g^*\widetilde M\to S_g^*M$ be the natural projection. We obtain
\begin{align*}\mathrm{Vol}_{c^2g}(M_\eps) \mathrm{Vol}(\mathbb S^{n-1})=&\int_{\pi(\mathcal V_\eps)}\ell_{c^2g}^\eps(x,\xi)d\mu_{g,\eps}(x,\xi)
\\&+\int_{\partial_-S^*_{g}{M}_\eps\setminus \pi(\mathcal V_\eps )}\ell_{c^2g}^\eps(x,\xi)d\mu_{g,\eps}(x,\xi)+O(\eps^\infty).
\end{align*}
Notice that the volume of $M_\eps$ grows polynomially in $\eps^{-1}$. Moreover, according to Lemma \ref{lemm:angle}, the support of the integral in each fiber for the second term is $O(\eps^N)$ for any $N\geq 1$. In total, the second term is negligible:
$$ \int_{\partial_-S^*_{g}{M}_\eps\setminus \pi(\mathcal V_\eps)}\ell_{c^2g}^\eps(x,\xi)d\mu_{g,\eps}(x,\xi)=O(\eps^\infty).$$
For the first term, we lift the computation to the universal cover. Remark that for any $(x,\xi)\in \partial_-S^*_{g}{M}_\eps$ and any lift $(\tilde x,\tilde \xi)\in \partial_-S^*_{g}\widetilde{M_\eps}$, we have $\ell^\eps_g(x,\xi)=\tilde \ell^\eps_{g}(\tilde x,\tilde \xi)$, and a similar statement for $c^2g$.  Let $\mathcal W_\eps\subset \partial_-S^*_{g}\widetilde{M_\eps} $ be such that $\pi \colon \mathcal W_\eps \to \pi(\mathcal V_\eps)$ is a diffeomorphism. In particular, using \eqref{eq:Id} and \eqref{eq:length}, we obtain
\begin{align*}
\int_{\pi(\mathcal V_\eps)}&\ell_{c^2g}^\eps(x,\xi)d\mu_{g,\eps}(x,\xi)=\int_{\mathcal W_\eps}\tilde\ell_{c^2g}^\eps(\tilde x,\tilde \xi)d\mu_{g,\eps}(\tilde x,\tilde \xi)
\\&
=\int_{\psi_\eps^{-1}(\mathcal W_\eps)}\tilde \ell_{g}^\eps(\tilde y,\tilde \eta)d[(\psi_\eps)^*(\mu_{g,\eps})](\tilde y,\tilde \eta)=\int_{\psi_\eps^{-1}(\mathcal W_\eps)}\tilde \ell_{g}^\eps(\tilde y,\tilde \eta)d\mu_{g,\eps}(\tilde y,\tilde \eta)+O(\eps^\infty)
\\&=\int_{\pi\circ\psi_\eps^{-1}(\mathcal W_\eps)} \ell_{g}^\eps( y, \eta)d\mu_{g,\eps}( y, \eta)+O(\eps^\infty)=\int_{\mathcal U_\eps} \ell_{g}^\eps( y, \eta)d\mu_{g,\eps}( y, \eta)+O(\eps^\infty)
\\&=\int_{\partial_-S^*_{g}{M}_\eps}\ell_{g}^\eps(x,\xi)d\mu_{g,\eps}(x,\xi)-\int_{\partial_-S^*_{g}{M}_\eps\setminus \mathcal U_\eps}\ell_{g}^\eps(x,\xi)d\mu_{g,\eps}(x,\xi)+O(\eps^\infty)
\\&=\mathrm{Vol}_{g}(M_\eps) \mathrm{Vol}(\mathbb S^{n-1})+O(\eps^\infty),
\end{align*}
where we used Santal\'o's formula \eqref{eq:Santalo} for the first term and the same argument as before to treat the second one. Combining everything gives \eqref{eq:volume}.
\end{proof}
\subsection{Bounding the integral of the conformal factor} \label{subsection:conformal} In this subsection, we show the following proposition.
\begin{prop}
One has
\begin{equation}
\label{eq:lower}
\int_{M_\eps}c(x)d\mu_g(x)\geq \mathrm{Vol}(M_\eps)+O(\eps^{\infty}).
\end{equation}
\end{prop}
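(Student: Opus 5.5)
This is Croke's argument: compare the $c^2g$-length of a $g$-geodesic with the length of the $c^2g$-geodesic having the same endpoints, then integrate with Santal\'o's formula for $g$ on $M_\eps$.

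\textbf{Step 1: Santal\'o.} Apply \eqref{eq:Santalo} for $g$ on $M_\eps$ to the function $F(x,\xi)=c(x)$. Ignoring the trapped set as before, this gives
$$\mathrm{Vol}(\mathbb S^{n-1})\int_{M_\eps}c\,d\mu_g=\int_{\partial_-S^*_g M_\eps}\Big(\int_0^{\ell_g^\eps(x,\xi)}c(\gamma_{x,\xi}(t))\,dt\Big)d\mu_{g,\eps}(x,\xi).$$
The inner integral is exactly the $c^2g$-length of the $g$-geodesic segment $\gamma_{x,\xi}\cap M_\eps$.

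\textbf{Step 2: pointwise comparison.} Work in the universal cover and restrict to the good set $\mathcal U_\eps$ of Lemma \ref{lemm:angle}. For $(\tilde x,\tilde\xi)\in\mathcal U_\eps$, let $p,q$ be the endpoints at infinity of the $g$-geodesic. The segment $\gamma_g(p,q)\cap\widetilde{M_\eps}$ runs from a point $a$ to a point $b$ on $\{\rho=\eps\}$. The $c^2g$-geodesic $\gamma_{c^2g}(p,q)$ crosses $\{\rho=\eps\}$ at the points $a',b'$ given by $\tilde\psi_\eps$ and its outgoing analogue, and by \eqref{eq:Id} we have $d(a,a'),d(b,b')=O(\eps^\infty)$.

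\begin{itemize}
\item In the simple case, $\widetilde M$ is $M$ and $c^2g$-geodesics minimize length. In negative curvature, $\tilde c^2\tilde g$ is negatively curved on a simply connected manifold, so its geodesics are again unique minimizers.
\item Hence $\ell_{c^2g}(\gamma_g|_{[a,b]})\ge d_{c^2g}(a,b)$.
\item By the triangle inequality, $d_{c^2g}(a,b)\ge d_{c^2g}(a',b')-d_{c^2g}(a,a')-d_{c^2g}(b,b')$, and $d_{c^2g}(a',b')=\tilde\ell^\eps_{c^2g}(\tilde\psi_\eps(\tilde x,\tilde\xi))$.
\end{itemize}

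\textbf{Main obstacle.} The hard point is that the distances $d_{c^2g}(a,a')$ and $d_{c^2g}(b,b')$ are measured near the boundary, where the metric blows up like $\eps^{-1}$. I would use that \eqref{eq:Id} is $O(\eps^N)$ for every $N$ in the $\overline{g}$-sense. Since $g\le C\eps^{-2}\overline g$ on $M_{\eps/2}$, joining $a$ to $a'$ by a short curve inside $\{\rho\ge\eps/2\}$ gives $d_{c^2g}(a,a')=O(\eps^{N-1})$, which is still $O(\eps^\infty)$. It would also be convenient to check that \cite{Lef20} controls the outgoing point $b'$ in the same way as the incoming one; if it does not, apply \eqref{eq:Id} to the reversed geodesic.

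Combining the three bullets with \eqref{eq:length} gives, uniformly on $\mathcal U_\eps$,
$$\int_0^{\ell_g^\eps}c\,dt\ \ge\ \tilde\ell^\eps_{g}(\tilde x,\tilde\xi)-O(\eps^\infty).$$

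\textbf{Step 3: integrate.} On the complement of $\pi(\mathcal U_\eps)$, the integrand is nonnegative (and $O(\eps^{-1})$), so dropping that region only decreases the right-hand side. Integrating the bound from Step 2 over $\mathcal U_\eps$ gives $\int_{\mathcal U_\eps}\ell_g^\eps\,d\mu_{g,\eps}$ minus an error. This error is $O(\eps^\infty)$, because $\mu_{g,\eps}(\partial_-S^*M_\eps)$ grows only polynomially in $\eps^{-1}$.

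By the same bad-angle estimate used for \eqref{eq:volume}, $\int_{\mathcal U_\eps}\ell_g^\eps\,d\mu_{g,\eps}=\mathrm{Vol}(\mathbb S^{n-1})\mathrm{Vol}_g(M_\eps)+O(\eps^\infty)$. Dividing by $\mathrm{Vol}(\mathbb S^{n-1})$ yields \eqref{eq:lower}.
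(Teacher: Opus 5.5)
Your argument is correct, and it takes a somewhat different route from the paper's. You run Croke's compact-case chain literally:
\begin{itemize}
\item Santal\'o for $g$ with $F=c$;
\item minimization of $c^2g$-geodesics in $\widetilde M$;
\item pairing each $g$-segment with the $c^2g$-geodesic that has the same endpoints at infinity (Lefeuvre's map $\tilde\psi_\eps$), then \eqref{eq:length};
\item Santal\'o for $g$ with $F=1$.
\end{itemize}

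The paper instead starts from Santal\'o for $c^2g$. It pairs the $c^2g$-geodesic with the $g$-geodesic issued from the same covector on $\partial M_\eps$. It closes the far end with the short path $p^\eps_{x,\xi}$, using closeness of $B_-^g$ and $B_-^{c^2g}$, equality of the scattering maps, and infinite-order agreement of the rescaled flows. It then transfers the measure with Proposition~\ref{prop:sympl}. That yields $\mathrm{Vol}_{c^2g}(M_\eps)\le\int_{M_\eps}c\,d\mu_g+O(\eps^\infty)$, and it needs \eqref{eq:volume} to reach the form with $\mathrm{Vol}_g$ used in the final AM--GM step.

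Your version lands directly on $\mathrm{Vol}_g$ and needs neither Proposition~\ref{prop:sympl} nor \eqref{eq:volume} here. It also uses $c>0$ to discard the bad set on the $\int c$ side without estimating it. The price is that you must control the crossing points at both ends. The paper's pairing shares the initial point, so it only has to close one end. Your handling of $d_{c^2g}(a,a')$ via $g\le 4\eps^{-2}\overline g$ on $\{\rho\ge\eps/2\}$ is fine.

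One point needs tightening. \eqref{eq:Id} concerns only the entry map. Applying it to the reversed geodesic requires the exit angle at $b$ to also lie in $[\eps^N,\pi-\eps^N]$, and you have not checked this. The fix is to restrict to covectors whose entry and exit angles are both good, and drop the rest on the $\int c$ side by positivity. On the $\ell^\eps_g$ side, the covectors with a bad exit angle are the image of the bad-entry set under the involution ``scattering relation of $M_\eps$ followed by reversal''. That involution preserves $\mu_{g,\eps}$ and $\ell^\eps_g$. So their contribution to $\int\ell^\eps_g\,d\mu_{g,\eps}$ is $O(\eps^\infty)$ by the same bad-angle estimate you already invoke.
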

\begin{proof} In the following, we denote by $\gamma_G(x,\xi)$ the $G$-geodesic generated by $(x,\xi)$.  
Applying Santal\'o's formula \eqref{eq:Santalo} for the constant function, we obtain
\begin{align*}
\mathrm{Vol}_{c^2g}(M_\eps) \mathrm{Vol}(\mathbb S^{n-1})&=\int_{\partial_-S^*_{c^2g}{M}_\eps}\ell_{c^2g}^\eps(x,\xi)d\mu_{c^2g,\eps}(x,\xi)
\\&= \int_{\partial_-S^*_{c^2g}{M}_\eps}\ell_{c^2g}\big(\gamma_{c^2g}(x,\xi)\cap \{\rho \geq \eps\} \big)d\mu_{c^2g,\eps}(x,\xi).
\end{align*}
Let $p^\eps_{x,\xi}$ denote the geodesic path between the endpoints of $\gamma_{c^2g}(x,\xi)\cap \{\rho \geq \eps\}$ and of $\gamma_{g}(x,\xi)\cap \{\rho \geq \eps\}$ not equal to $(x,\xi)$, see Figure \ref{fig:segment}.
\begin{figure}
    \centering
\begin{tikzpicture}[x=0.75pt,y=0.75pt,yscale=-1,xscale=1]
\draw    (247,42) .. controls (298,71) and (349,168) .. (339,270) ;
\draw    (331,42) .. controls (382,71) and (433,168) .. (423,270) ;
\draw    (424.55,249.69) .. controls (264,216.23) and (239,188.23) .. (240,152.23) .. controls (241,116.23) and (306,73.23) .. (344,51.23) ;
\draw [color={rgb, 255:red, 0; green, 27; blue, 255 }  ,draw opacity=1 ]   (423.55,232) .. controls (383,236.23) and (254.1,225.77) .. (234.55,175) .. controls (215,124.23) and (341.45,78.14) .. (362,69.23) ;
\draw  [color={rgb, 255:red, 0; green, 0; blue, 0 }  ,draw opacity=1 ][fill={rgb, 255:red, 0; green, 0; blue, 0 }  ,fill opacity=1 ] (335,228) .. controls (335,225.41) and (337.04,223.31) .. (339.55,223.31) .. controls (342.06,223.31) and (344.1,225.41) .. (344.1,228) .. controls (344.1,230.59) and (342.06,232.69) .. (339.55,232.69) .. controls (337.04,232.69) and (335,230.59) .. (335,228) -- cycle ;
\draw    (339.55,228) -- (322.94,223.73) ;
\draw [shift={(321,223.23)}, rotate = 14.41] [color={rgb, 255:red, 0; green, 0; blue, 0 }  ][line width=0.75]    (10.93,-3.29) .. controls (6.95,-1.4) and (3.31,-0.3) .. (0,0) .. controls (3.31,0.3) and (6.95,1.4) .. (10.93,3.29)   ;
\draw [color={rgb, 255:red, 255; green, 0; blue, 31 }  ,draw opacity=1 ][line width=1.5]    (292,85.23) -- (300.14,97.63) ;
\draw [color={rgb, 255:red, 255; green, 0; blue, 31 }  ,draw opacity=1 ]   (400,97.23) .. controls (341.29,77.57) and (387.53,113.87) .. (313.13,96.5) ;
\draw [shift={(312,96.23)}, rotate = 13.32] [color={rgb, 255:red, 255; green, 0; blue, 31 }  ,draw opacity=1 ][line width=0.75]    (10.93,-3.29) .. controls (6.95,-1.4) and (3.31,-0.3) .. (0,0) .. controls (3.31,0.3) and (6.95,1.4) .. (10.93,3.29)   ;
\draw  [color={rgb, 255:red, 0; green, 0; blue, 0 }  ,draw opacity=1 ][fill={rgb, 255:red, 0; green, 0; blue, 0 }  ,fill opacity=1 ] (420,250.69) .. controls (420,248.1) and (422.04,246) .. (424.55,246) .. controls (427.06,246) and (429.1,248.1) .. (429.1,250.69) .. controls (429.1,253.27) and (427.06,255.37) .. (424.55,255.37) .. controls (422.04,255.37) and (420,253.27) .. (420,250.69) -- cycle ;
\draw  [color={rgb, 255:red, 0; green, 27; blue, 255 }  ,draw opacity=1 ][fill={rgb, 255:red, 0; green, 27; blue, 255 }  ,fill opacity=1 ] (420,231.31) .. controls (420,228.73) and (422.04,226.63) .. (424.55,226.63) .. controls (427.06,226.63) and (429.1,228.73) .. (429.1,231.31) .. controls (429.1,233.9) and (427.06,236) .. (424.55,236) .. controls (422.04,236) and (420,233.9) .. (420,231.31) -- cycle ;

\draw (265,145) node [anchor=north west][inner sep=0.75pt]   [align=left] {$\displaystyle \gamma_{g}(x,\xi)$};
\draw (237,232) node [anchor=north west][inner sep=0.75pt]  [color={rgb, 255:red, 0; green, 27; blue, 255 }  ,opacity=1 ] [align=left] {$\displaystyle \gamma _{c^{2}g}(x,\xi)$};
\draw (433,125) node [anchor=north west][inner sep=0.75pt]   [align=left] {$\displaystyle \partial M$};
\draw (201,51) node [anchor=north west][inner sep=0.75pt]   [align=left] {$\displaystyle \{\rho =\eps \}$};
\draw (341.55,231) node [anchor=north west][inner sep=0.75pt]   [align=left] {$\displaystyle x$};
\draw (317,203) node [anchor=north west][inner sep=0.75pt]   [align=left] {$\displaystyle \xi $};
\draw (406,85) node [anchor=north west][inner sep=0.75pt]  [color={rgb, 255:red, 255; green, 0; blue, 31 }  ,opacity=1 ] [align=left] {$\displaystyle p_{x,\xi }^{\eps}$};
\draw (435,202) node [anchor=north west][inner sep=0.75pt]  [color={rgb, 255:red, 0; green, 27; blue, 255 }  ,opacity=1 ] [align=left] {$\displaystyle B_{-}^{c^{2} g}( x,\xi )$};
\draw (441,239) node [anchor=north west][inner sep=0.75pt]   [align=left] {$\displaystyle B_{-}^{g}( x,\xi )$};

\end{tikzpicture}
    \caption{The segment $p_{x,\xi}^{\eps}$.}
    \label{fig:segment}
\end{figure}
Note that 
\begin{equation}
    \label{eq:p_x}
\ell_{c^2g}(p^\eps_{x,\xi})=O(\eps^\infty). 
\end{equation}
Indeed, let $(\tilde x,\tilde \xi)$ be a lift of $(x,\xi)$ to $\partial_-S^*_{g}\widetilde{M}_\eps$. Let $B_-^g(\tilde x,\tilde \xi)$ (resp. $B_-^{c^2g}(\tilde x,\tilde \xi)$) be as in \eqref{eq:B}. We claim that the distance between $B_-^g(\tilde x,\tilde \xi)$ and $B_-^{c^2g}(\tilde x,\tilde \xi)$ is $O(\eps^\infty)$. Indeed, by \cite{GGSU19}*{Equation~(2.18)}, we have $\tau_-^g(\tilde x,\tilde \xi)\sim -\eps$ and $\tau_-^{c^2g}(\tilde x,\tilde \xi)\sim -\eps$ when $\eps\to 0^+$. Next, we use Lemma \ref{lemm:Sebastian} to obtain that $\|\overline{\varphi}^{g}_\tau-\overline{\varphi}^{c^2g}_\tau\|_{C^k}=O(\tau^\infty)$, see for instance \cite{Lef20}*{Remark~4.1}, and these two facts together with Definition \ref{def:B} imply the claim. 

 Let us denote by $z_{g,\eps}(\tilde x,\tilde \xi)$ (resp. $z_{c^2g,\eps}(\tilde x,\tilde \xi)$) the second intersection point of $\gamma_g(\tilde x,\tilde \xi) $ (resp. $\gamma_{c^2g}(\tilde x,\tilde \xi)$) with $\{\rho=\eps\}$. We see that for $G=g,c^2g$, we have
$$z_{G,\eps}(\tilde x,\tilde \xi)= \overline{\varphi}^G_{-\tau_+(z_{G,\eps}(\tilde x,\tilde \xi))}\big(S_G(B_-^G(\tilde x,\tilde \xi))\big).$$
Now, since the boundary distances of $g$ and $c^2g$ coincide, their scattering maps also coincide, see for instance \cite{Lef20}*{Proposition~4.2}. In particular, using the smoothness of the scattering maps, we deduce that $S_g(B_-^g(\tilde x,\tilde \xi))$ and $S_{c^2g}(B_-^{c^2g}(\tilde x,\tilde \xi))$ are $O(\eps^\infty)$-close. 
Using the same argument as above, we deduce \eqref{eq:p_x}. Together with Lemma \ref{lemm:eps}, this implies, using the fact that geodesics minimize length,
\begin{align*}
\mathrm{Vol}_{c^2g}(M_\eps)& \mathrm{Vol}(\mathbb S^{n-1})\leq \int_{\partial_-S^*_{c^2g}{M}_\eps}\ell_{c^2g}\big(\big(\gamma_{g}(x,\xi)\cap \{\rho \geq \eps\} \big)\cup p^\eps_{x,\xi}\big)d\mu_{c^2g,\eps}(x,\xi)
\\&=\int_{\partial_-S^*_{g}{M}_\eps}\left(\int_0^{\ell_g^\eps(x,\xi)}c(\varphi_t^g(x,\xi))dt\right)d\mu_{g,\eps}(x,\xi)+O(\eps^\infty)
\\&=\int_{S^*_gM_\eps}c(x)d\mu_g(x,\xi)+O(\eps^\infty)=\int_{M_\eps}c(x)d\mu_g(x)\mathrm{Vol}(\mathbb S^{n-1})+O(\eps^\infty),
\end{align*}
where we applied Santal\'o's formula \eqref{eq:Santalo} for the function $c$.
This concludes the proof of the proposition.
\end{proof}
\subsection{Obtaining rigidity} \label{subsection:rigidity_proof}
We now conclude the proof of Theorems \ref{theo:main} and \ref{theo:main2}.
 In the compact case, this follows from H\"older's inequality once the equality of volumes is established. The fact that $c\equiv 1$ then follows from the equality case in this inequality. Since all our estimates hold modulo $O(\eps^\infty)$, we will use a slightly different argument to finish the proof and use the AM-GM inequality instead of H\"older's inequality to take advantage of the non-negativity of the integrand. 
\begin{proof}[Proof of Theorems \ref{theo:main} and \ref{theo:main2}]
We apply the AM-GM inequality to obtain
\begin{equation}
    \label{eq:pos}
    c\leq \frac{1}{n}( c^n+\underbrace{1+\ldots+1}_{(n-1) \text{ times }})=\frac{1}{n}c^n+\frac{n-1}{n}.
\end{equation}
Integrating over $M_\eps$ gives
\begin{align*}
\frac 1n \mathrm{Vol}_{c^2g}(M_\eps)+\frac{n-1}n \mathrm{Vol}_{g}(M_\eps)-\int_{M_\eps} cd\mu_g=\int_{M_\eps}\underbrace{\left(\frac{1}{n}c^n+\frac{n-1}{n}-c\right)}_{\geq 0}d\mu_g\geq 0. 
\end{align*}
Using \eqref{eq:volume}, this implies
$\int_{M_\eps} cd\mu_g(v)\leq \mathrm{Vol}_g(M_\eps)+O(\eps^\infty).$
Combining this with \eqref{eq:lower} we obtain
$\int_{M_\eps} cd\mu_g(v)=\mathrm{Vol}_g(M_\eps)+O(\eps^\infty)$. Hence
$$\int_{M_\eps}{\left(\frac{1}{n}c^n+\frac{n-1}{n}-c\right)}d\mu_g(x)=O(\eps^\infty).$$ 
Fix a small $\eps_{0}>0$. Note that $M_{\eps_{0}} \subset M_{\eps}$ for any $0<\eps<\eps_{0}$. Since the integrand is non-negative,
$$\mathcal{I} \colon \eps \mapsto \int_{M_\eps}{\left(\frac{1}{n}c^n+\frac{n-1}{n}-c\right)}d\mu_g(x), $$
is non-increasing. Furthermore $0 \leq \mathcal{I}(\eps_{0}) \leq \mathcal{I}(\eps)=O(\eps^{\infty})$. It follows that $\mathcal{I}$ constant equal to $0$, that is, for any $\eps>0$, one has 
$$\int_{M_\eps}{\left(\frac{1}{n}c^n+\frac{n-1}{n}-c\right)}d\mu_g(x)=0.$$
This in turn implies that the integrand vanishes identically on $M_\eps$ and thus on $M$. The equality case in \eqref{eq:pos} forces $c\equiv 1$ on $M$, and this shows that $g=c^2g$.
\end{proof}

\end{document}